\newtheorem{theorem}{Theorem}[section]
\newtheorem{lemma}[theorem]{Lemma}
\newtheorem{example}[theorem]{Example}
\newtheorem{question}[theorem]{Question}
\newtheorem{proposition}[theorem]{Proposition}
\newtheorem{construction}[theorem]{Construction}
\theoremstyle{definition}
	\newtheorem{definition}[theorem]{Definition}
	\newtheorem{remark}[theorem]{Remark}
\DeclareMathOperator {\AGammaL}{A\Gamma L}
\DeclareMathOperator {\GammaL}{\Gamma L}
\DeclareMathOperator {\AGL}{AGL}
\DeclareMathOperator {\GL}{GL}
\DeclareMathOperator {\PGL}{PGL}
\DeclareMathOperator {\PSL}{PSL}
\DeclareMathOperator {\PSU}{PSU}
\DeclareMathOperator {\Sym}{Sym}
\DeclareMathOperator {\Alt}{Alt}
\DeclareMathOperator {\Aut}{Aut}
\DeclareMathOperator {\rank}{rank}
\DeclareMathOperator {\C}{C}
\DeclareMathOperator {\D}{D}
\DeclareMathOperator {\N}{N}
\DeclareMathOperator {\LS}{\mathcal{LS}}
\DeclareMathOperator {\R}{\mathcal{R}}
\DeclareMathOperator {\FF}{\mathbb{F}}
\DeclareMathOperator {\ZZ}{\mathbb{Z}}
\begin{document}
\title{Extremely primitive groups and linear spaces}
\author{Melissa Lee, Gabriel Verret}
\address{Department of Mathematics, University of Auckland, Auckland, New Zealand}
\email{melissa.lee@auckland.ac.nz, g.verret@auckland.ac.nz}
\date{\today}

\begin{abstract}
A finite non-regular primitive permutation group $G$ is \textit{extremely primitive} if a point stabiliser acts primitively on each of its nontrivial orbits.  Such groups have been studied for almost a century, finding various applications.
The classification of extremely primitive groups was recently completed by Burness and Lee, who relied on an earlier classification of soluble extremely primitive groups by Mann, Praeger and Seress.  
Unfortunately, there is an inaccuracy in the latter classification. We correct this mistake, and also investigate regular linear spaces which admit groups of automorphisms that are extremely primitive on points.
\end{abstract}

\maketitle

\section{Introduction}
All groups and linear spaces in this paper are assumed to be finite. Let $G$ be a non-regular primitive group. We say that $G$ is \textit{extremely primitive} if a point stabiliser $G_\alpha$ acts primitively on each of its nontrivial orbits.  Some examples of extremely primitive groups include $\Sym_n$ in its natural action and $\PGL_2(q)$ on the projective line over $\FF_q$. 
Extremely primitive groups have been studied for almost a century~\cite{Manning},  and have arisen several other contexts, including in the constructions of the simple sporadic groups $\mathrm{J}_2$ and $\mathrm{HS}$, as well as in the study of transitive permutation groups with a given upper bound on their subdegrees (see \cite{PaPr} for example).

The problem of classifying extremely primitive groups has garnered significant attention in recent years and a final classification has recently been completed following a series of papers \cite{MPS, BL, BPS1, BPS2, BT, BT2}.  
Unfortunately, an oversight was made in the classification of soluble extremely primitive groups in \cite{MPS}. In this paper, the authors prove a set of necessary conditions  for a soluble group to be extremely primitive \cite[Lemma 3.3]{MPS}. They then claim that these conditions are sufficient \cite[Theorem 1.2]{MPS} but this is never proved and turns out to be incorrect. This error is then reproduced in later work that relies on this result, for example \cite[Theorem 1]{BL} and \cite[Theorem 4]{BT}.

The first purpose of this paper is to correct the classification of soluble extremely primitive groups. Recall that for a prime $p$ and positive integer $d$, a \textit{primitive prime divisor} of $p^d-1$ is a prime that divides $p^d-1$ but not $p^i-1$ for each $1\leq i \leq d-1$.

\begin{theorem}
\label{theo:mainEP}
A soluble group $G$ is extremely primitive if and only if  $G=V\rtimes H\leq \AGammaL_1(p^d)$ for a prime $p$ and $d\geq 1$, where $V$ is the natural vector space and $\C_t\rtimes\C_e\cong  H \leq \GammaL_1(p^d)$,  where $t=|H\cap \GL_1(p^d)|$ is a  primitive prime divisor of $p^d-1$ and either $e=1$,  or $e$ is a prime dividing $d$ and  $t=(p^d-1)/(p^{d/e}-1)$.
\end{theorem}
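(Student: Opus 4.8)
The plan is to prove both implications by first reducing to the affine case and then studying the orbits of a point stabiliser on the underlying vector space.

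\smallskip
\emph{Reduction.} A soluble primitive group is of affine type: its socle $V$ is elementary abelian of order $p^d$, acts regularly, and $G=V\rtimes H$ with $H=G_\alpha\le\GL_d(p)$ irreducible on $V$. It is non-regular exactly when $H\neq 1$, and then $H$ has no nonzero fixed vector, so the nontrivial orbits of $G_\alpha$ are precisely the orbits of $H$ on $V\setminus\{0\}$; hence $G$ is extremely primitive iff $H$ is primitive on each of its orbits on $V\setminus\{0\}$. By the necessary conditions of \cite[Lemma~3.3]{MPS} we may assume, identifying $V$ with $\FF_{p^d}$, that $H\le\GammaL_1(p^d)$. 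Then $C_t:=H\cap\GL_1(p^d)$ is cyclic of order $t\mid p^d-1$, and $H/C_t$ embeds in the cyclic Galois group of $\FF_{p^d}/\FF_p$, hence is cyclic of some order $e\mid d$; so $H$ is a metacyclic extension $C_t.C_e$.

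\smallskip
\emph{Necessity.} Since $C_t\lhd H$ acts on $V\setminus\{0\}=\FF_{p^d}^\times$ fixed-point-freely by field multiplication, the cosets of $C_t$ form a block system for $H$ on each of its orbits; primitivity of every orbit action therefore forces each $H$-orbit to be a single coset of $C_t$, so $H$ stabilises each coset of $C_t$ setwise. Two consequences follow. First, on such a coset $C_t$ acts regularly, so if $t$ were composite a proper nontrivial (characteristic, hence $H$-normal) subgroup of $C_t$ would produce nontrivial blocks --- hence $t$ is prime. Secondly, if $e>1$, lift a generator of the Galois image to some $g\in H$, acting as $v\mapsto\mu v^{p^{d/e}}$; as $C_t$ is the $H$-orbit of $1$, we get $\mu=g(1)\in C_t$, and the requirement $g(\lambda C_t)=\lambda C_t$ for all $\lambda$ becomes $\mu\lambda^{p^{d/e}-1}\in C_t$ for all $\lambda$, i.e. $(\FF_{p^d}^\times)^{p^{d/e}-1}\subseteq C_t$, i.e. $(p^d-1)/(p^{d/e}-1)\mid t$; since this quotient exceeds $1$ and $t$ is prime, $t=(p^d-1)/(p^{d/e}-1)$. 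Finally $t$ is a primitive prime divisor of $p^d-1$: for $e=1$ this is just irreducibility of $C_t$ on $\FF_p^d$, and for $e>1$ it follows from $t>p^{d/2}\ge p^i-1$ for every proper divisor $i$ of $d$; and for $e>1$, $e$ is prime, for a proper divisor $f$ with $1<f<e$ would factor $t$ as $\bigl((p^d-1)/(p^{d/f}-1)\bigr)\bigl((p^{d/f}-1)/(p^{d/e}-1)\bigr)$ with both factors exceeding $1$. As $t$ is then a prime larger than $e$, the extension splits by the Schur--Zassenhaus theorem, so $H\cong C_t\rtimes C_e$.

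\smallskip
\emph{Sufficiency.} Conversely, suppose $G=V\rtimes H$ with $H=C_t\rtimes C_e\le\GammaL_1(p^d)$ as in the statement. Since $t$ is a primitive prime divisor of $p^d-1$, $C_t$ lies in no $\GL_1(p^i)$ with $i<d$ and so acts irreducibly on $\FF_p^d$; hence $H$ is irreducible, $G$ is primitive, and $H\neq 1$ makes it non-regular. If $e=1$ then $H=C_t$ acts regularly on each of its orbits on $\FF_{p^d}^\times$, which have prime size $t$, hence primitively. If $e>1$ then $t=(p^d-1)/(p^{d/e}-1)$ forces $C_t=(\FF_{p^d}^\times)^{p^{d/e}-1}$, equivalently $C_t=\ker\mathrm N$ for the norm $\mathrm N=\mathrm N_{\FF_{p^d}/\FF_{p^{d/e}}}$; writing the generator of $C_e$ as $v\mapsto\mu v^{p^{d/e}}$, its $e$-th power being the identity gives $\mathrm N(\mu)=\mu^t=1$, so $\mu\in C_t$, and $\mu\lambda^{p^{d/e}-1}\in C_t$ for all $\lambda$ since $\lambda^{p^{d/e}-1}$ is a $(p^{d/e}-1)$-th power; thus the generator, hence all of $H$, stabilises every coset of $C_t$. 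So each $H$-orbit on $V\setminus\{0\}$ is a coset of $C_t$ of prime size $t$, on which $H$ acts transitively (already $C_t$ does); a transitive group of prime degree is primitive, so $G$ is extremely primitive.

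\smallskip
\emph{The main obstacle.} The crux is the sufficiency direction --- precisely what \cite[Theorem~1.2]{MPS} asserts without proof --- and with it the refinement $t=(p^d-1)/(p^{d/e}-1)$ when $e>1$: one must show that the block system coming from $C_t$ is the \emph{only} obstruction to primitivity of the orbit actions and that it collapses to the trivial one exactly under this arithmetic condition. The supporting number theory (that $t$ prime with $(p^d-1)/t\mid p^{d/e}-1$ forces $t=(p^d-1)/(p^{d/e}-1)$, and that this quantity being prime forces $t$ to be a primitive prime divisor and $e$ to be prime) is elementary; the real care is in tracking the quantifier ``for every nonzero vector'' throughout, which is the point absent from \cite{MPS}.
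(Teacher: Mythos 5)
Your argument is essentially sound, and most importantly the sufficiency direction --- the step that is genuinely absent from \cite{MPS} --- is carried out correctly: you show that when $t=(p^d-1)/(p^{d/e}-1)$ the twisted generator $v\mapsto\mu v^{p^{d/e}}$ preserves every coset of $\C_t$ (via $\mu\in\C_t$ and $\lambda^{p^{d/e}-1}\in\C_t$), so each nontrivial orbit of $H$ is a coset of prime size $t$, and a transitive group of prime degree is primitive. In content this is the same as the paper's Proposition~\ref{prop:SameOrbitSize} (the criterion $ip^{d/e}\equiv i\pmod m$ for $H$ and $T$ to share orbits) combined with the maximal-stabiliser argument in Theorem~\ref{theo:EPorbs}. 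Where you genuinely diverge is the necessity reduction: you quote \cite[Lemma 3.3]{MPS} to place $H$ inside $\GammaL_1(p^d)$, which is legitimate since only the sufficiency claim of \cite[Theorem 1.2]{MPS} is at fault, whereas the paper re-derives this reduction from Passman's classification of soluble $3/2$-transitive groups (Theorem~\ref{theo:Passman}) and therefore has to eliminate $\mathscr{G}(p^d)$ by a direct maximality argument and the finitely many exceptional degrees by computer. Your route is shorter; the paper's is self-contained and independent of the internal details of the \cite{MPS} proof.

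Two points in your necessity argument need patching. First, as written you never exclude $t=1$: if $\C_t$ is trivial its orbits are singletons, the induced block system is trivial, so ``each $H$-orbit is a single coset of $\C_t$'' does not follow and ``$t$ is prime'' is not established. A quick fix: if $H\cap\GL_1(p^d)=1$ and $H\neq 1$, then a generator of $H$ has the form $v\mapsto\mu v^{p^{i}}$ with the relevant norm of $\mu$ equal to $1$, so by Hilbert 90 one can conjugate by an element of $\GL_1(p^d)$ (an $\FF_p$-linear map) to make $H$ a group of pure field automorphisms, which fixes a proper subfield pointwise and contradicts irreducibility; alternatively, simply quote from \cite[Lemma 3.3]{MPS} the full set of necessary conditions, which already include that $t$ is a primitive prime divisor. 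Second, a lift of a generator of the Galois image acts as $v\mapsto\mu v^{p^{jd/e}}$ with $\gcd(j,e)=1$, not necessarily with $j=1$; replacing it by a suitable power reduces to $j=1$, but this normalisation should be stated, both in the necessity computation and when you take the $e$-th power in the sufficiency step.
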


This corrected version of the classification of soluble extremely primitive groups also impacts results which relied on it. For example, see Theorem~\ref{theo:EPAffine} for an updated version of~\cite[Theorem 1]{BL}.

Our next goal is to study linear spaces that admit a group of automorphisms that is extremely primitive on points. We first introduce some terminology. A  {\it linear space} $S=(\mathcal{P},\mathcal{L})$ is a point-line incidence structure with a set $ \mathcal{P}$ of {\it points} and a set $\mathcal{L}$ of {\it lines}, where each line is a subset of $\mathcal{P}$ such that each pair of points is contained in exactly one line. A linear space is \textit{nontrivial} if it has at least two lines and if every line has at least three points. A linear space is \textit{regular} if all its lines have the same size. An {\it automorphism} of $S$ is a permutation of $\mathcal{P}$ which preserves $\mathcal{L}$. The automorphisms of $S$ form its {\it automorphism group} $\Aut(S)$. (One could consider automorphisms  as acting simultaneously on points and lines but, in a nontrivial linear space, the action on points is already faithful and this viewpoint will usually be simpler for us.)

If $S=(\mathcal{P},\mathcal{L})$ and $S'=(\mathcal{P},\mathcal{L}')$ are two linear spaces with the same set of points, we say that $S'$ is a {\it refinement} of $S$ if every line of $S'$ is contained in a line of $S$. (In other words, for every $\ell'\in \mathcal{L}'$, there exists $\ell\in\mathcal{L}$ such that $\ell'\subseteq\ell$.) 

\begin{theorem}\label{Theo:ExtremelyPrimitiveLinearSpaces}
Let $S$ be a nontrivial regular linear space. If $G\leq \Aut(S)$ is extremely primitive, then one of the following holds:
\begin{enumerate}
\item \label{psl:main} $G = \PSL_2(2^{2^n})$ with point stabiliser $\D_{2(2^{2^n}+1)}$, where $2^{2^n}+1$ is a Fermat prime, and $S$ is a refinement of $\LS(G)$;
\item \label{e=2} $G$ is as in Theorem \ref{theo:mainEP} with $e\geq 2$, and $S$ is a refinement of $\LS(G)$;
\item \label{e=1}  $G$ is as in Theorem \ref{theo:mainEP} with $e=1$.
\end{enumerate}
\end{theorem}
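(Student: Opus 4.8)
The plan is to combine the classification of extremely primitive groups with a combinatorial constraint on the lines of $S$. Fix $\alpha\in\mathcal P$, write $v=|\mathcal P|$, and let $k\ge 3$ be the common line size; by regularity the lines through $\alpha$ partition $\mathcal P\setminus\{\alpha\}$ into $r=(v-1)/(k-1)$ parts of size $k-1$, and this partition is $G_\alpha$-invariant. The key point is that extreme primitivity forces this partition to restrict trivially to each nontrivial $G_\alpha$-orbit $\Delta$: either $\Delta$ lies in a single line through $\alpha$ (call $\Delta$ \emph{linear}) or $\Delta$ meets every line through $\alpha$ in at most one point (\emph{transversal}). Two consequences I would record: (a) $G$ cannot be $2$-transitive on $\mathcal P$, since $G_{\alpha\beta}$ stabilises the line through $\alpha$ and $\beta$ but is transitive on $\mathcal P\setminus\{\alpha,\beta\}$, forcing that line to be all of $\mathcal P$; and (b) if some suborbit $\Delta_i$ is linear, lying in the line $\ell_i\ni\alpha$, then $G_\alpha$ stabilises $\ell_i$, so $G_\alpha=G_{\ell_i}$ by maximality of $G_\alpha$, whence $|\ell_i^G|=v$ and, counting flags, each point lies on exactly $k$ of the lines in $\ell_i^G$; in particular $k\le r$. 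Quoting the (now complete) classification of extremely primitive groups, we may assume $G$ is of affine type or almost simple, and that if $G$ is soluble it is as in Theorem~\ref{theo:mainEP}.

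Affine case: write $G=V\rtimes H$ with $V=\FF_{p^d}$ and $H=G_0$. Since the translations act regularly on $V$, every line of $S$ is a translate of a line through $0$, and $S$ refines a $G$-invariant linear space precisely when every line of $S$ through $0$ lies in a line of the latter through $0$. By (a) the $2$-transitive affine groups are excluded; the remaining insoluble affine extremely primitive groups form a short explicit list, which I would treat one at a time using the linear/transversal dichotomy on their (known) suborbits together with the bound $k\le r$ from (b). For the soluble affine groups Theorem~\ref{theo:mainEP} gives $H=\C_t\rtimes\C_e\le\GammaL_1(p^d)$; the case $e=1$ is conclusion~\eqref{e=1}. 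If $e\ge 2$ then $t=(p^d-1)/(p^{d/e}-1)$ is prime, $\gcd(e,p^{d/e}-1)=1$, and the $H$-suborbits on $V\setminus\{0\}$ are precisely the $p^{d/e}-1$ fibres, each of size $t$, of the norm map $N\colon\FF_{p^d}^\ast\to\FF_{p^{d/e}}^\ast$. Let $\ell\ni 0$ be a line of $S$ and $w\in\ell\setminus\{0\}$; applying the dichotomy to each $N$-fibre, there are two possibilities. If $\ell$ meets every $N$-fibre in at most one point, then each element of $H_w\cong\C_e$ (which stabilises $\ell$) sends a point $x\in\ell$ to a point of $\ell$ in the fibre $N^{-1}(N(x))$, hence to $x$; so $H_w$ fixes $\ell$ pointwise, and since $\mathrm{Fix}_V(H_w)=w\FF_{p^{d/e}}$ we get $\ell\subseteq w\FF_{p^{d/e}}$. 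If instead $\ell\setminus\{0\}$ lies in a single fibre $F$, then the $\C_t$-orbit of $\ell$ cannot have size $t$ (else $t$ lines meeting pairwise only in $0$ would lie in the $t$-element set $F$), so $\C_t$ stabilises $\ell$ and, being transitive on $F$, forces $\ell=F\cup\{0\}$. A short argument rules the latter out globally --- a fibre-line and a subfield-line through $0$ meet in the two points $0$ and the unique intersection of $F$ with the subfield-line, and a linear space all of whose lines through $0$ are fibre-lines fails the axiom for two points in distinct fibres --- so every line of $S$ through $0$ lies in a one-dimensional $\FF_{p^{d/e}}$-subspace; that is, $S$ refines $\LS(G)=\mathrm{AG}(e,p^{d/e})$, which is conclusion~\eqref{e=2}.

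Almost simple case: here I would work through the classification of almost simple extremely primitive pairs $(G,G_\alpha)$, in each case confronting the linear/transversal dichotomy and consequences (a), (b) with the known subdegrees of the pair. A linear suborbit imposes the size conditions in (b) (and if moreover $G_\alpha$ is transitive on the lines through $\alpha$, so that $G$ is flag-transitive on $S$, the classification of flag-transitive linear spaces gives further leverage), while a transversal suborbit bounds $k-1$ by the number of suborbits; since $G$ is not $2$-transitive by (a), and the value of $v$ and the subdegrees are explicit, this eliminates every pair except $(\PSL_2(2^{2^n}),\D_{2(2^{2^n}+1)})$ with $2^{2^n}+1$ a Fermat prime, the finitely many sporadic pairs being checked directly. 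For this surviving family the primality of $q+1$ lets the elements of order $q+1$ play the role of the affine torus above; I would exhibit the $G$-invariant linear space $\LS(G)$ on the $q(q-1)/2$ points and, by an argument parallel to the affine one (analysing how a point stabiliser acts on the suborbits and on $\LS(G)$), show that every preserved nontrivial regular linear space refines it, giving conclusion~\eqref{psl:main}.

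The main obstacle is the almost simple case: eliminating the full list of almost simple extremely primitive pairs is a lengthy case analysis, with some computation needed for the sporadic pairs, and the most delicate part is identifying $\LS(G)$ for the $\PSL_2(2^{2^n})$ family and establishing the refinement statement there. By contrast the affine analysis, though it splits into several sub-cases, is essentially self-contained once Theorem~\ref{theo:mainEP} is available.
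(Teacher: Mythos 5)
Your overall architecture (classification of extremely primitive groups, a linear/transversal dichotomy for suborbits, and identification of the line through two points with the fixed-point set of their stabiliser) is the right one, and your dichotomy is exactly \cite[Lemma 2.6]{GZ} (Proposition~\ref{prop:lineblocks}), provable in one line from primitivity of $G_\alpha$ on $\Delta$. But the parts of the proposal that carry most of the weight are announced rather than proved. The elimination of the almost simple pairs (``this eliminates every pair except\dots''), of the insoluble affine groups in Theorem~\ref{theo:EPAffine}(\ref{spor2}) (deleted permutation modules, Mathieu and Conway groups, etc.), and the refinement statement for $\PSL_2(2^{2^n})$ (``I would exhibit $\LS(G)$\dots show that every preserved space refines it'') are plans, not arguments; this is precisely the content the paper imports from Guan--Zhou (Theorem~\ref{theo:GZ}, Theorem~\ref{theo:GZ2}) and Burness--Thomas, and, in the $\PSL_2$ case, settles with the subdegree computation of \cite[Proposition 5.3]{BPS1} together with Property~$(\star)$, transversality and Proposition~\ref{prop:refinement}. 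In addition, your consequence (a) is misproved: $2$-transitivity does not make $G_{\alpha\beta}$ transitive on $\mathcal{P}\setminus\{\alpha,\beta\}$ (that would be $3$-transitivity), and $2$-transitive groups certainly can preserve nontrivial regular linear spaces ($\PGL_3(2)$ on the Fano plane, $\AGL_d(p)$ on affine space). The statement is true for extremely primitive $G$, but the proof must go through your dichotomy applied to the unique suborbit, or simply quote Theorem~\ref{theo:GZ2} --- which you in any case need in its stronger rank~$\geq 3$ form to dispose of, e.g., the rank-$3$ affine groups with orthogonal point stabilisers and $\mathrm{G}_2(4)$ with stabiliser $\mathrm{J}_2$.

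In the soluble affine case with $e\geq 2$, your transversal branch is correct and is in substance the paper's argument: showing $H_w$ fixes $\ell$ pointwise and $\ell\subseteq\mathrm{Fix}_V(H_w)=w\FF_{p^{d/e}}$ is exactly the statement that $\ell$ lies in the line of $\LS(G)$ through $0$ and $w$ (the paper phrases this via Lemma~\ref{lemma:Has*} and Proposition~\ref{prop:refinement}). The branch where a line $\ell$ through $0$ contains a whole norm fibre $F$, however, is not sound as written. That $\C_t$ stabilises $\ell$ only gives that $\ell\setminus\{0\}$ is a union of fibres, not that $\ell=F\cup\{0\}$; and the ``global'' elimination fails on both counts: a line of $S$ contained in a subfield line need not pass through the unique point of $F$ on that subfield line, so no two lines of $S$ have been shown to share two points, and a pencil of fibre-lines at $0$ violates no linear-space axiom at $0$ (every pair through $0$ still lies on exactly one line). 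What actually kills this branch is counting: $|\ell\cap F|=t$ forces $k\geq t+1$, and Fisher's inequality $r\geq k$ gives
\[
p^d-1 = r(k-1)\ \geq\ t(t+1)\ >\ t^2=\left(\frac{p^d-1}{p^{d/e}-1}\right)^{2}\ \geq\ \left(p^{d/2}+1\right)^{2}\ >\ p^d-1,
\]
a contradiction --- which is the paper's argument. With that repair your affine analysis matches the paper's; without it, and without the case analyses listed above actually carried out, the proposal does not yet prove the theorem.
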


In cases (\ref{psl:main}) and (\ref{e=2}) of Theorem~\ref{Theo:ExtremelyPrimitiveLinearSpaces}, there is a unique ``coarsest'' linear space $\LS(G)$ admitting $G$ as a group of automorphisms.  These linear spaces are  defined in Section~\ref{sec:Star} and some of their properties described in Section \ref{examples}. It is also possible to describe the admissible refinements of $\LS(G)$ in a systematic manner, see Proposition~\ref{prop:EveryRef} and  Examples~\ref{Example:refinement} and~\ref{Example:refinement2}. Classifying the linear spaces which arise in case (\ref{e=1}) seems much more difficult. We give some examples and discuss this further in Section \ref{examples}.

\section{Proof of Theorem~\ref{theo:mainEP}}

Let $G$ be a soluble extremely primitive group. As noted in the proof of \cite[Lemma 3.3]{MPS}, $G$ is a soluble $3/2$-transitive group. (Recall that a transitive permutation group $G$ on a set $\Omega$ is called {\it $3/2$-transitive} if all orbits of $G_\omega$ on $\Omega\setminus\{\omega\}$ have the same size, with this size being greater than 1.) Such groups were classified by Passman. Before stating this classification, we require the following definition. For an odd prime $p$ and integer $d\geq 1$, let $\mathscr{G}(p^d)$ be the subgroup of $\AGL_2(p^d)$ containing all translations and whose point stabiliser consists of all diagonal and antidiagonal matrices in $\GL_2(p^d)$ with determinant $\pm 1$.

\begin{theorem}[\cite{Passman1, Passman2}]\label{theo:Passman}
If $G$ is a soluble $3/2$-transitive group, then one of the following holds:
\begin{enumerate}
\item $G$ is a Frobenius group;
\item $G\leq \AGammaL_1(p^d)$ for a prime $p$ and $d\geq 1$;
\item $G=\mathscr{G}(p^d)$ for an odd prime $p$ and $d\geq 1$;
\item $G$ is one of a collection of groups of degree $3^2,5^2, 7^2,11^2,17^2$ or $3^4$.
\end{enumerate}
\end{theorem}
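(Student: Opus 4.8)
The plan is to reduce the classification to a question about linear groups and then exploit the well-developed structure theory of soluble irreducible linear groups. First I would invoke the classical dichotomy, due to Wielandt, that a $3/2$-transitive permutation group is either a Frobenius group or primitive; if it is Frobenius we are in conclusion (1), so I may assume that $G$ is primitive. Since $G$ is soluble and primitive, it is of affine type: its socle $V$ is an elementary abelian $p$-group of order $p^d$ acting regularly by translations, and a point stabiliser $H=G_0$ is an irreducible soluble subgroup of $\GL(V)\cong\GL_d(p)$, so that $G=V\rtimes H$. Under this identification the orbits of $H$ on $V\setminus\{0\}$ are exactly the nontrivial orbits of the point stabiliser, so the $3/2$-transitivity hypothesis becomes the statement that $H$ acts on $V\setminus\{0\}$ with all orbits of a common size exceeding $1$; that is, $H$ is a soluble irreducible $\tfrac12$-transitive linear group. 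The whole problem is thereby recast as the classification of such $H$.

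The main work is then the structural analysis of soluble irreducible linear groups with the equal-orbit property, which I would organise along the standard primitive/imprimitive dichotomy for linear groups. If $H$ is imprimitive, it preserves a decomposition $V=V_1\oplus\cdots\oplus V_k$ with $k\geq 2$ and permutes the summands; combining the equal-orbit condition with the constraints on how $H$ can act on and between the $V_i$ forces $H$ to be essentially monomial with $k=2$, and a direct count of orbit lengths isolates the family $\mathscr{G}(p^d)$ of conclusion (3) together with subgroups already lying in $\AGammaL_1(p^d)$. If instead $H$ is a primitive linear group, I would apply the description of soluble primitive linear groups due to Suprunenko and Huppert: a suitable normal subgroup of $H$ is of symplectic (extraspecial) type, and $H$ lies inside the normaliser of this subgroup. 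In the generic situation this normaliser structure forces $H$ to preserve a field-extension structure on $V$, so that $H\leq\GammaL_1(p^d)$ and hence $G\leq\AGammaL_1(p^d)$, giving conclusion (2). The remaining, non-generic primitive configurations arise only for small extraspecial normalisers and yield the finite list of exceptional groups of degrees $3^2,5^2,7^2,11^2,17^2$ and $3^4$ recorded in conclusion (4).

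The main obstacle is the arithmetic bookkeeping that separates the generic case from the exceptional ones. Writing $m$ for the common orbit length, one has $m\mid p^d-1$ and exactly $(p^d-1)/m$ orbits, and matching these data against $|H|$ and against the orders of the relevant extraspecial normalisers produces tight divisibility and congruence conditions; the surviving sporadic examples exist precisely because of a handful of numerical coincidences among these conditions in small degree. Confirming that no further coincidences occur, and that the listed small groups really are $\tfrac12$-transitive, requires an explicit finite check over the soluble primitive linear groups of the relevant small degrees, which is where the bulk of the casework lies. By contrast, the imprimitive branch is largely combinatorial, and I expect it to be comparatively routine once the monomial reduction with $k=2$ is in place.
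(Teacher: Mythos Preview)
The paper does not give its own proof of this theorem; it is quoted from Passman's work \cite{Passman1,Passman2} and used as a black box in the proof of Theorem~\ref{theo:mainEP}. There is therefore no argument in the paper against which to compare your proposal.

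That said, your outline is broadly faithful to Passman's original strategy: Wielandt's dichotomy disposes of the Frobenius case, solubility of a primitive group forces affine type, and the problem becomes the classification of soluble irreducible linear groups $H\leq\GL_d(p)$ that are $\tfrac12$-transitive on nonzero vectors, after which one splits according to whether $H$ is primitive or imprimitive as a linear group. The main caveat is that your sketch significantly understates the work in both branches. The imprimitive case does not collapse to ``monomial with $k=2$'' as directly as you suggest; one must first control the kernel of the action on the blocks and the induced permutation group, and several configurations besides $\mathscr{G}(p^d)$ have to be excluded by explicit orbit-length computations. In the primitive branch, invoking the Suprunenko--Huppert normal structure is only the starting point: separating the generic $\GammaL_1$ case from the extraspecial-normaliser cases, and then showing that only the listed degrees $3^2,5^2,7^2,11^2,17^2,3^4$ survive, is exactly the delicate casework that fills Passman's two papers. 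As a roadmap your proposal is sound, but it is a plan rather than a proof.
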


We now examine each of the cases of Theorem~\ref{theo:Passman} and show that $G$ is extremely primitive if and only if it is as in Theorem~\ref{theo:mainEP}. First, if $G$ is as in Theorem~\ref{theo:Passman} (1), then the proof of \cite[Lemma 3.3]{MPS} implies that $G$ is extremely primitive if and only if it is as in Theorem~\ref{theo:mainEP} with $e=1$.  We also use {\sf GAP} \cite{GAP4} to confirm that no group in Theorem~\ref{theo:Passman} (4) is extremely primitive.

%
%

\subsection{$G=\mathscr{G}(p^d)$}
Let $G=\mathscr{G}(p^d)$ as in the preamble of Theorem \ref{theo:Passman}.  Let $u=[0,0]$ and $v=[1,0]$. By definition, $G_u$ consists of all diagonal and antidiagonal matrices in $\GL_2(p^d)$ with determinant $\pm 1$. This is a group of order $4(p^d-1)$. Next, $G_{uv}= \langle \left( \begin{smallmatrix}
1&0\\0&-1 \end{smallmatrix}\right) \rangle$, so $|G_{uv}|= 2$. By Sylow's Theorem, for $G_{uv}$ to be maximal in $G_u$, one must have $p^d-1=1$, that is $p^d=2$, contradicting that fact that $p$ is odd. So $G_{uv}$ is not maximal in $G_u$ and thus $G$ is not extremely primitive.

\subsection{$G\leq \AGammaL_1(p^d)$}

We need the following preliminary result.

\begin{proposition}\label{prop:SameOrbitSize}
Let $p$ be a prime and $d\geq 1$,  let $H\leq \GammaL_1(p^d)$ and let $T=H\cap \GL_1(p^d)$. We have $H=T\rtimes E$ with $T\cong \C_t$, $E\cong \C_e$ and $e$ divides $d$. Let $V\cong \C_p^d$ be the natural vector space for $\GammaL_1(p^d)$. The following are equivalent:
\begin{enumerate}
\item $T$ and $H$ have the same orbits on $V$;
\item \label{SameSize} the orbits of $H$ on $V\setminus\{0\}$ all have the same size;
\item \label{count} $e=1$ or $p^d-1$ divides $t(p^{d/e}-1)$.
\end{enumerate}
\end{proposition}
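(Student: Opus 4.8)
The plan is to first fix the structure $H = T \rtimes E$ with $T \cong \C_t$, $E \cong \C_e$, $e \mid d$: this is standard, since $\GammaL_1(p^d) = \GL_1(p^d) \rtimes \Gal(\FF_{p^d}/\FF_p)$, the subgroup $\GL_1(p^d) \cong \C_{p^d-1}$ is cyclic and normal, $T = H \cap \GL_1(p^d)$ is its intersection with $H$ hence normal in $H$ and cyclic, and $H/T$ embeds in the cyclic group $\C_d$, so it is cyclic of some order $e \mid d$; splitness follows since $T$ has order coprime to... actually one should just note $H/T \cong \C_e$ lifts because any complement in $\GammaL_1$ of the shape $\langle \omega \rangle \rtimes \langle \phi^{d/e} \rangle$ works, or cite that $\GammaL_1(p^d)$ is metacyclic. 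I would identify $V \setminus \{0\}$ with $\FF_{p^d}^\times$, on which $\GL_1(p^d)$ acts by multiplication (regularly) and the Galois group $\langle \phi \rangle$ acts by $x \mapsto x^p$.

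**The equivalences (1) $\Leftrightarrow$ (2).** The implication (1) $\Rightarrow$ (2) is immediate since $T \trianglelefteq H$ acts on $\FF_{p^d}^\times$ with all orbits of size exactly $t$ (the action is free: a field element fixing a nonzero vector by multiplication is $1$). For (2) $\Rightarrow$ (1): $T$-orbits have size $t$ always, and each $H$-orbit is a union of $T$-orbits, so $|H\text{-orbit}|$ is a multiple of $t$; if moreover $T \neq H$ there is at least one $H$-orbit that is a union of at least two $T$-orbits — namely, I would argue that $E$ cannot stabilise setwise every coset of $T$ in $\FF_{p^d}^\times$ unless it acts trivially, and $E$ acting trivially on $\FF_{p^d}^\times/T$ means $x^{p-1} \in T$ for all $x$, i.e.\ the image of the $(p-1)$-power map lies in $T$. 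So if (2) holds and $T \neq H$, then \emph{every} $H$-orbit has size $t \cdot e'$ for the \emph{same} $e' > 1$ dividing $e$; but the singleton-generated orbit structure forces a contradiction unless that is impossible — more carefully, I would pick the orbit of $1 \in \FF_{p^d}^\times$: its $H$-stabiliser contains $E$ (since $E$ fixes $1$, as $1^p = 1$), so this orbit has size $\le t$, hence exactly $t$, hence equals the $T$-orbit; by (2) every orbit has size $t$, so every $H$-orbit coincides with a $T$-orbit, which is (1).

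**The equivalence with (3): the counting condition.** This is the main obstacle and the heart of the proposition. Condition (1)/(2) says $H = TE$ acts on $\FF_{p^d}^\times$ with $H = T$ effectively on orbits, equivalently $E$ stabilises every $T$-coset in $\FF_{p^d}^\times$, equivalently (taking the generator $\phi$ of $E$, acting as $x \mapsto x^{p^{d/e}}$ on $\FF_{p^d}$, wait — $E$ has order $e$ so its generator is $\phi^{d/e}: x \mapsto x^{p^{d/e}}$) the map $x \mapsto x^{p^{d/e}}/x = x^{p^{d/e}-1}$ lands inside $T$ for all $x \in \FF_{p^d}^\times$. Since $\FF_{p^d}^\times$ is cyclic of order $p^d - 1$, the image of the power map $x \mapsto x^{p^{d/e}-1}$ is the unique subgroup of order $(p^d-1)/\gcd(p^d-1, p^{d/e}-1) = (p^d-1)/(p^{d/e}-1)$ (using $p^{d/e}-1 \mid p^d-1$), and $T$ is the unique subgroup of order $t$; containment of subgroups of a cyclic group is equivalent to divisibility of orders, so the condition becomes $(p^d-1)/(p^{d/e}-1) \mid t$, i.e.\ $p^d - 1 \mid t(p^{d/e}-1)$. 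When $e = 1$ this is vacuous ($E$ trivial), matching the "$e=1$ or \ldots" disjunction. I would write this out as a short chain of equivalences. The one genuinely delicate point to get right is which Galois element generates $E$ and the corresponding exponent — it is $\phi^{d/e}$ with exponent $p^{d/e}$, so the relevant "ratio to be in $T$" is $x^{p^{d/e}-1}$ — and then the clean identity $\gcd(p^d-1,\,p^{d/e}-1) = p^{\gcd(d, d/e)} - 1 = p^{d/e}-1$, which holds precisely because $(d/e) \mid d$. Everything else is bookkeeping in cyclic groups.
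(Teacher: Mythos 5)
Your argument is correct and takes essentially the same route as the paper's proof: identify $V\setminus\{0\}$ with $\FF_{p^d}^{\times}$ so that the $T$-orbits are the cosets of $T$, note that the coset of $1$ is always preserved by the generator of $E$ (so $H$ always has an orbit of size $t$, which drives (2)$\Rightarrow$(1)), and translate ``$E$ preserves every coset'' into the divisibility condition --- the paper phrases this as the congruence $ip^{d/e}\equiv i \pmod{m}$ with $m=(p^d-1)/t$, while you use the image of the power map $x\mapsto x^{p^{d/e}-1}$ together with $\gcd(p^d-1,p^{d/e}-1)=p^{d/e}-1$, which is the same computation in multiplicative rather than exponent notation. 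Your tentative asides (on the splitting $H=T\rtimes E$, which the statement supplies and the paper likewise takes as given, and the abandoned first attempt at (2)$\Rightarrow$(1)) are unnecessary but harmless.
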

\begin{proof}
Since $\GL_1(p^d)$ acts regularly on $V\setminus \{0\}$, we can identify this set with $\GL_1(p^d)$. Let $\alpha$ be a generator of $\GL_1(p^d)\cong \C_{p^d-1}$. Let $m=\frac{p^d-1}{t}$ and note that $T=\langle \alpha^m\rangle$ and the orbits of $T$ on $V\setminus\{0\}$ are its cosets, so of the form $\alpha^i\langle \alpha^m\rangle$ for some $i$. In particular, the orbits of $T$ on $V\setminus\{0\}$ all have the same size so $(1) \Longrightarrow (2)$.

Write $E=\langle f\rangle$.  Now $E$ acts as field automorphisms on $V$ and thus $\alpha^f=\alpha^{p^{d/e}}$.  So 
$$(\alpha^{i+jm})^f=\alpha^{(i+jm)(p^{d/e})}=\alpha^{ip^{d/e}+jmp^{d/e}}.$$
This calculation shows that $f$ preserves $\alpha^i\langle \alpha^m\rangle$ if and only if 
\begin{equation*}\label{eq:mod}
ip^{d/e}\equiv i\pmod{m}.
\end{equation*}

This always holds for $i=0$, so $f$ always preserves $T$. In particular, $H$ always has an orbit of size $|T|$ on $V\setminus\{0\}$.  On the other hand, $ip^{d/e}\equiv i\pmod{m}$ holds for every $i$ if and only if  $p^{d/e}\equiv 1\pmod{m}$ if and only if $m$ divides $p^{d/e}-1$.

Now, if all the orbits of $H$ on $V\setminus\{0\}$ all have the same size, they must have size $|T|$ and $m$  must divide $p^{d/e}-1$, so $(2) \Longrightarrow (3)$.

Finally, if $e=1$, then $H=T$, and if $m$  divides $p^{d/e}-1$, then by the above $T$ and $H$ have the same orbits on $V\setminus\{0\}$ and thus also on $V$, so $(3) \Longrightarrow (1)$.
\end{proof}

\begin{remark}
A version of Proposition \ref{prop:SameOrbitSize} with the extra assumption that $H$ is $p$-exceptional appears as \cite[Lemma 2.7]{GLP}.
\end{remark}

We are now ready to classify the extremely primitive groups in this family.

\begin{theorem}
\label{theo:EPorbs}
Let $p$ be a prime and $d\geq 1$,  let $H\leq \GammaL_1(p^d)$ and let $T=H\cap \GL_1(p^d)$. We have $H=T\rtimes E$ with $T\cong \C_t$,  $E\cong \C_e$ and  $e$ divides $d$. Let $V\cong \C_p^d$ be the natural vector space for $\GammaL_1(p^d)$ and let $G=V\rtimes H\leq \AGammaL(1,p^d)$. Then $G$ is extremely primitive if and only if the following conditions hold:
\begin{enumerate}
\item $t$ is a primitive prime divisor of $p^d-1$, and
\item $e=1$ or $p^d-1=t(p^{d/e}-1)$.
\end{enumerate}
Moreover, all of the nontrivial orbits of $H$ have size $t$.
\end{theorem}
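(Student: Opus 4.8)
The plan is to analyze when $G = V \rtimes H$ is extremely primitive by translating the definition into conditions on the stabiliser chain. Since $G$ is a primitive affine group of degree $p^d$, the point stabiliser of $0 \in V$ is exactly $H$. Extreme primitivity then says: $H$ acts primitively on each of its nontrivial orbits on $V$, equivalently, for a representative $v$ in each nontrivial $H$-orbit, $H_v$ is a maximal subgroup of $H$. So first I would fix notation: with $\alpha$ a generator of $\GL_1(p^d)$ identified with $V \setminus \{0\}$, $T = \langle \alpha^m \rangle$ where $m = (p^d-1)/t$, and $E = \langle f \rangle$ acting by $\alpha^f = \alpha^{p^{d/e}}$, as in the proof of Proposition~\ref{prop:SameOrbitSize}.

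Next I would split on whether $e = 1$ or $e > 1$. If $e = 1$, then $H = T \cong \C_t$ is cyclic and acts semiregularly on $V \setminus \{0\}$, so every nontrivial orbit has size $t$ and $H_v = 1$ for all $v \neq 0$; thus $H_v$ is maximal in $H$ for every such $v$ if and only if $|H| = t$ is prime (so that $1$ is maximal in $\C_t$). This forces $t$ to be prime, and I would then note that $t \mid p^d - 1$; to get that $t$ is a \emph{primitive} prime divisor, I would observe that $H$ is primitive (equivalently $G$ is primitive) only when $V$ is an irreducible $\FF_p[H]$-module, and a cyclic group of prime order $t$ acts irreducibly on $\C_p^d$ precisely when $d$ is the multiplicative order of $p$ modulo $t$, i.e. $t$ is a ppd of $p^d - 1$. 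Conversely, if $t$ is a ppd then condition (2) with $e=1$ holds and the above shows $G$ is extremely primitive, with all nontrivial orbits of size $t$.

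For $e > 1$: here the subtlety is that $H$ has orbits of possibly different sizes on $V \setminus \{0\}$, and we need \emph{each} stabiliser $H_v$ maximal in $H$. I would first apply Proposition~\ref{prop:SameOrbitSize}: the orbit $T = \alpha^0 T$ always has size $|T| = t$, with stabiliser containing $E$ (since $f$ preserves $T$), so $H_v \geq E$ for $v$ in this orbit; since $|H| = te$ and $|H_v| = |H|/|vH| = te/t = e$, in fact $H_v = E$ for $v$ in the orbit $T$. Now if some other orbit had size strictly larger than $t$, its stabiliser would be a proper subgroup of $E \cong \C_e$ of order dividing $e$ but $< e$ — but also, for extreme primitivity we'd need \emph{that} stabiliser maximal in $H$, forcing it to have prime index, while the orbit $T$ already gives a maximal subgroup $E$ of index $t$; a structure argument on $H = \C_t \rtimes \C_e$ shows the maximal subgroups are very constrained. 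The cleanest route is: $G$ extremely primitive $\Rightarrow$ $E$ is maximal in $H$ (from the orbit $T$) \emph{and} all orbits have the same size $t$ (so that Proposition~\ref{prop:SameOrbitSize}(3) gives $p^d - 1 \mid t(p^{d/e}-1)$, and since $m = (p^d-1)/t$ must divide $p^{d/e}-1 \leq (p^d-1)/(p^{d/e}) \cdot \text{stuff}$, a size count forces equality $p^d - 1 = t(p^{d/e}-1)$); and then primitivity of the action forces $t$ to be a ppd as before. For the converse, assuming (1) and (2), Proposition~\ref{prop:SameOrbitSize} gives all orbits of size $t$ with stabiliser a conjugate of $E$, and one checks $E$ is maximal in $H = \C_t \rtimes \C_e$ using that $t$ is prime (so no intermediate subgroup between $E$ and $H$ can exist on the $\C_t$ side) and $e$ is prime (condition forced by $t = (p^d-1)/(p^{d/e}-1)$ being a ppd, which I would verify rules out composite $e$).

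The main obstacle I anticipate is the $e > 1$ direction showing that extreme primitivity forces \emph{all} orbits to have equal size — a priori $H$ could be extremely primitive with unequal orbit sizes if every stabiliser happened to be maximal. Ruling this out requires carefully exploiting the cyclic-by-cyclic structure of $H$: the orbit $T$ pins down $E$ as a maximal subgroup of index $t$, and then any orbit of size $t' > t$ has stabiliser of order $te/t' < e$ which must \emph{also} be maximal, and I expect a short argument (comparing to the subgroup lattice of $\C_t \rtimes \C_e$, and using that $t$ is the prime equal to the index of $E$) to derive a contradiction unless $t' = t$. Packaging this lattice argument cleanly, and simultaneously extracting that $e$ must be prime, is the delicate part; everything else is bookkeeping with Proposition~\ref{prop:SameOrbitSize} and the standard fact that primitivity of an affine group is equivalent to irreducibility of the stabiliser on $V$.
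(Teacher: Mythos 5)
Your skeleton is the same as the paper's: translate extreme primitivity into maximality of the stabilisers $H_v$ in $H$, invoke Proposition~\ref{prop:SameOrbitSize}, and use the primitive prime divisor condition to control primitivity of $G$. However, the step you yourself flag as the delicate one---that for $e>1$ extreme primitivity forces every nontrivial $H$-orbit to have size exactly $t$---is only anticipated, not proved, and it is the crux of the theorem. The paper closes it in one line, with no analysis of the subgroup lattice of $\C_t\rtimes\C_e$ and no primeness assumption on $e$: since $T$ is semiregular on $V\setminus\{0\}$, a nontrivial $H$-orbit has size $at$ and a point $v$ in it satisfies $H_v\cap T=1$ and $|H_v|=e/a$; if $a>1$ then $H_v<H_vT<H$ (the middle subgroup has order $te/a$), so $H_v$ is not maximal. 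Conversely, when $a=1$ the stabiliser is a complement of $T$ of order $e$, and any overgroup $K$ either contains $T$ (whence $K=H$) or meets $T$ trivially (whence $|K|\le e$ and $K=H_v$), using only that $t$ is prime. In particular your worry about ``extracting that $e$ must be prime'' is a red herring: the theorem does not assert it (it is only a consequence recorded in a remark afterwards), and it is not needed for maximality of $E$.

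Two further repairs are needed. Your passage from the divisibility $p^d-1\mid t(p^{d/e}-1)$ to the equality $p^d-1=t(p^{d/e}-1)$ by ``a size count'' does not work: $m=(p^d-1)/t$ dividing $p^{d/e}-1$ only bounds $t$ from below. The paper argues instead that, since $t$ is a primitive prime divisor, $t\nmid p^{d/e}-1$, and since $t$ is prime and $p^{d/e}-1$ divides $p^d-1$, we get $t\mid\frac{p^d-1}{p^{d/e}-1}$, i.e.\ $p^{d/e}-1\mid m$; combined with $m\mid p^{d/e}-1$ this forces equality. Finally, for $e>1$ your ``primitivity forces $t$ to be a ppd as before'' needs two additions: primeness of $t$ now comes from maximality of $E=H_1$ in $H$ (if $T_1<T$ is nontrivial and proper, then $E<T_1E<H$), not from a regular action as in the $e=1$ case; and if $t$ is prime but not a ppd, one must observe that the proper subfield $\FF_{p^b}$ (with $b$ the order of $p$ modulo $t$) is invariant under the field automorphisms in $E$ as well as under $T$, hence under $H$, so $G$ is imprimitive---irreducibility of $H$ does not by itself give irreducibility of $T$. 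With these points supplied, your argument coincides with the paper's proof.
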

\begin{proof}
Note that $\GL_1(p^d)$ is regular on $V\setminus\{0\}$ so $T$ is semiregular on $V\setminus\{0\}$. In particular, it acts regularly on all of its nontrivial orbits and they each have size $t$. A regular group is primitive if and only if it is trivial or has prime order, so we can assume $t$ is 1 or prime.  If $t$ is not a primitive prime divisor of $p^d-1$, then $T \leq \GL_1(p^b)$ for some proper subfield $\FF_{p^b} \subset \FF_{p^d}$ and $\C_p^b<V$ is $H$-invariant hence $G$ is not primitive. We thus assume that $t$ is a primitive prime divisor of $p^d-1$ and show that $G$ is extremely primitive if and only if (2) holds.  Since $t$ is a primitive prime divisor of $p^d-1$, $T$ acts irreducibly on $V$ so $V\rtimes T$ is primitive, and so is $G$. 

We claim that $G$ is extremely primitive if and only $H$ has the same orbits as $T$. Indeed, apart from the trivial orbit of size $1$, an orbit of $H$ must have order $at$ for some $a$. A stabiliser of a point in such an orbit is a subgroup of order $e/a$ of $\C_e$. This is maximal in $H$ if and only if $a=1$. This proves our claim.

By Proposition~\ref{prop:SameOrbitSize}, $H$ and $T$ have the same orbits if and only if $e=1$ or $p^d-1$ divides $t(p^{d/e}-1)$. Now, $t$ is a primitive prime divisor of $p^d-1$, so if $e\geq 2$, then  $t$ does not divide $p^{d/e}-1$ which itself divides $p^d-1$. Since $t$ is prime, it follows that $t$ divides $\frac{p^d-1}{p^{d/e}-1}$ and thus $p^{d/e}-1$ divides $\frac{p^d-1}{t}$ hence $p^d-1=t(p^{d/e}-1)$.
\end{proof}

\begin{remark}\mbox{}
\begin{enumerate}
\item Note that the condition $p^d-1=t(p^{d/e}-1)$ is quite restrictive. For example, if $d=2$, then it is only satisfied when $p=2$, $e=2$ and $t=3$.
\item Moreover, this condition, together with the fact that $t$ is prime, implies that $e$ cannot be composite. Indeed, if $f$ is a divisor of $e$, then $p^{d/f}-1$ is a multiple of $p^{d/e}-1$ and a divisor of $p^d-1$, but $\frac{p^d-1}{p^{d/e}-1}$ is a prime.
\item The converse of the previous remark does not hold. In other words, $e$ being prime is not sufficient to guarantee that $p^d-1=t(p^{d/e}-1)$. The smallest counterexample is $(p,d,t,e)=(5,2,3,2)$.
\end{enumerate}
\end{remark}

\section{Groups with Property $(\star)$ and the linear space $\LS(G)$}\label{sec:Star}

In this section, we define and prove some basic facts about $\LS(G)$, the linear space that appears in  Theorem~\ref{Theo:ExtremelyPrimitiveLinearSpaces} (\ref{e=2}).

\begin{definition}\label{Def:SpecialLinearSpace}
We say that a permutation group $G$ on $\Omega$ has {\it Property $(\star)$} if, for all $u,v,w\in\Omega$ with $u\neq w$,
$$G_{uv}\leq G_w\Longrightarrow G_{uw}\leq G_v.$$
Let $G$ be a group with Property $(\star)$. For $u,v\in\Omega$, let $\Lambda_{uv}=\{w\in\Omega\mid G_{uv}\leq G_w\}$ and let $\LS(G)$ be the point-line incidence structure having $\Omega$ as set of points and $\{\Lambda_{uv}\mid u, v\in\Omega,u\neq v\}$ as set of lines.
\end{definition}

\begin{proposition}
\label{prop:starLS}
If $G\leq \Sym(\Omega)$ has Property $(\star)$, then $\LS(G)$ is a linear space with $G\leq \Aut(\LS(G))$.
\end{proposition}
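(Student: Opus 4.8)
The plan is to verify directly that $\LS(G)$ satisfies the two axioms of a linear space — that any two distinct points lie on a (unique) common line — and then to check that $G$ permutes the line set. First I would record the basic features of the sets $\Lambda_{uv}$. For distinct $u,v$ we have $u,v\in\Lambda_{uv}$, since $G_{uv}$ fixes both $u$ and $v$; in particular every $\Lambda_{uv}$ has at least two points, so these are genuine lines and each pair of points is contained in \emph{at least} one line, namely $\{u,v\}\subseteq\Lambda_{uv}$. It also helps to note the monotonicity-type fact that $\Lambda_{uv}$ depends only on the subgroup $G_{uv}$: if $G_{uv}=G_{u'v'}$ then $\Lambda_{uv}=\Lambda_{u'v'}$, which is immediate from the definition.

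The substantive point is \emph{uniqueness}: if $w_1,w_2$ are two distinct points both lying on the line $\Lambda_{uv}$, I must show $\Lambda_{uv}=\Lambda_{w_1w_2}$. This is exactly where Property $(\star)$ is used, and I expect it to be the main obstacle — specifically, getting the containments to chain together in both directions. Here is the intended argument. Since $w_1,w_2\in\Lambda_{uv}$, we have $G_{uv}\leq G_{w_1}$ and $G_{uv}\leq G_{w_2}$, hence $G_{uv}\leq G_{w_1}\cap G_{w_2}=G_{w_1w_2}$. For the reverse inclusion I want $G_{w_1w_2}\leq G_{uv}$, i.e.\ $G_{w_1w_2}\leq G_u$ and $G_{w_1w_2}\leq G_v$. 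From $G_{uv}\leq G_{w_1}$, applying Property $(\star)$ with the triple $(u,v,w_1)$ (legitimate since $u\neq w_1$, as $w_1$ could a priori equal $v$ but then the conclusion is trivial, and otherwise $u\neq w_1$) gives $G_{uw_1}\leq G_v$; similarly one extracts that $G_{uv}\leq G_{w_2}$ forces relations among the stabilisers of $u,v,w_2$. Chaining these: from $G_{uv}\le G_{w_1}\cap G_{w_2}$ and a further application of $(\star)$ to a triple involving $w_1,w_2$ and $u$ (respectively $v$), one deduces $G_{w_1w_2}\leq G_u$ and $G_{w_1 w_2}\le G_v$. I should be careful here to handle the degenerate cases where some of $u,v,w_1,w_2$ coincide, and to apply $(\star)$ only when its hypothesis $u\neq w$ holds; in each degenerate case the desired equality is either trivial or follows from a shorter chain. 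The upshot is $G_{uv}=G_{w_1w_2}$, and then by the monotonicity remark $\Lambda_{uv}=\Lambda_{w_1w_2}$, so $w_1,w_2$ determine a unique line. This proves $\LS(G)$ is a linear space.

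Finally, for $G\leq\Aut(\LS(G))$: take $g\in G$. It suffices to show $g$ maps lines to lines, and for this I claim $(\Lambda_{uv})^g=\Lambda_{u^g v^g}$. This reduces to the identity $(G_{uv})^g=G_{u^g v^g}$ together with the observation that $w\mapsto w^g$ is a bijection of $\Omega$: indeed $w\in\Lambda_{uv}$ iff $G_{uv}\leq G_w$ iff $(G_{uv})^g\leq (G_w)^g$ iff $G_{u^gv^g}\leq G_{w^g}$ iff $w^g\in\Lambda_{u^gv^g}$. Since $(G_{uv})^g = G_{u^g}^g \cap G_v^g = G_{u^g}\cap G_{v^g} = G_{u^g v^g}$ is a standard fact about point stabilisers under conjugation, this is routine. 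Hence every $g\in G$ maps the line set of $\LS(G)$ bijectively to itself, so $G\leq\Aut(\LS(G))$, completing the proof.
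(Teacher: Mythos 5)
Your proposal is correct and takes essentially the same approach as the paper: both verify uniqueness by using Property $(\star)$ to show that the two-point stabiliser (hence the line) is unchanged when points of $\Lambda_{uv}$ are substituted for $u,v$, the paper doing this one point at a time via the lemma $\Lambda_{uv}=\Lambda_{uw}$ for $w\in\Lambda_{uv}$, $u\neq w$, while you chain two applications of $(\star)$ to get $G_{uv}=G_{w_1w_2}$ directly. Your chaining does go through (from $G_{uw_1}\leq G_v$ one gets $G_{uw_1}\leq G_{uv}\leq G_{w_2}$, and then $(\star)$ applied to $(w_1,u,w_2)$ yields $G_{w_1w_2}\leq G_u$, similarly for $G_v$), and the degenerate cases where $w_i\in\{u,v\}$ indeed reduce to a single application, as you anticipate.
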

\begin{proof}
Let $u$ and $v$ be distinct points. Clearly, we have $u,v\in \Lambda_{uv}$. We show that if $w\in\Lambda_{uv}$  and $u\neq w$,  then $\Lambda_{uv}=\Lambda_{uw}$.  Since $w\in\Lambda_{uv}$, we have $G_{uv}\leq G_w$ and then $G_{uw}\leq G_v$ by Property $(\star)$.  Let $x\in \Lambda_{uv}$, then $ G_{uw}=G_u\cap G_{uw}\leq G_u\cap G_v= G_{uv}\leq G_x$
 so that $x\in\Lambda_{uw}$. This shows that $\Lambda_{uv}\subseteq \Lambda_{uw}$.  Similarly, for $y \in \Lambda_{uw}$, we have $ G_{uv}=G_u\cap G_{uv}\leq G_{uw}\leq G_y$ so $y \in \Lambda_{uv}$, and $\Lambda_{uv}=\Lambda_{uw}$ as claimed.


Now, suppose that $u,v\in\Lambda_{ab}$ for some $a\neq b$ and $u\neq v$. We show that $\Lambda_{ab}=\Lambda_{uv}$. This is clear if $\{u,v\}=\{a,b\}$ so we assume this is not the case. Without loss of generality, we may assume that $u\neq a\neq v$. By the previous paragraph, we have $\Lambda_{au}=\Lambda_{ab}=\Lambda_{av}$. So $v\in \Lambda_{au}$ and hence,  by the previous paragraph, $\Lambda_{au}=\Lambda_{uv}$ and hence $\Lambda_{ab}=\Lambda_{uv}$. This shows that $\LS(G)$ is a linear space. The fact that $G\leq \Aut(\LS(G))$ is obvious from the definition.
\end{proof}

Recall that a {\it flag} in a linear space is a pair $(u,\ell)$  such that $u$ is a point, $\ell$ is a line and $u\in\ell$.

\begin{definition}
Let $S$ be a linear space and $G\leq \Aut(S)$. We say that $(S,G)$ is {\it transverse} if, for every flag $(u,\ell)$ of $S$ and every orbit $\Delta$ of $G_u$, we have $|\ell\cap\Delta|\leq 1$. 
\end{definition}

\begin{proposition}
\label{prop:refinement}
Let $S$ be a linear space and $G\leq \Aut(S)$.  If $G$ has Property  $(\star)$ and $(S,G)$ is transverse, then $S$ is a refinement of $\LS(G)$.
\end{proposition}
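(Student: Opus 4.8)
The plan is to check the definition of refinement directly: I must show that every line of $S$ is contained in a line of $\LS(G)$. Property $(\star)$ is needed here mainly to make sense of the statement: by Proposition~\ref{prop:starLS} it guarantees that $\LS(G)$ is a linear space whose lines are exactly the sets $\Lambda_{uv}$. So I would fix an arbitrary line $\ell$ of $S$; we may assume $\ell$ contains two distinct points $u,v$, since otherwise there is nothing to prove. The claim to establish is then $\ell\subseteq\Lambda_{uv}$.

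To prove this claim, fix $w\in\ell$; I need to show $G_{uv}\leq G_w$. Let $g\in G_{uv}$. The key observation is that $g$ is an automorphism of $S$ fixing both $u$ and $v$, so it fixes setwise the unique line of $S$ through $u$ and $v$, namely $\ell$; in particular $g(w)\in\ell$. On the other hand $g\in G_{uv}\leq G_u$, so $w$ and $g(w)$ lie in a common orbit $\Delta$ of $G_u$ on $\Omega$. Since $(u,\ell)$ is a flag of $S$, transversality of $(S,G)$ gives $|\ell\cap\Delta|\leq 1$, and as $w,g(w)\in\ell\cap\Delta$ we conclude $g(w)=w$. Hence every element of $G_{uv}$ fixes $w$, i.e. $w\in\Lambda_{uv}$.

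Since $w\in\ell$ was arbitrary, $\ell\subseteq\Lambda_{uv}$, and since $\ell$ was an arbitrary line of $S$, it follows that $S$ is a refinement of $\LS(G)$. I do not expect a genuine obstacle: the whole argument rests on the elementary fact that an automorphism fixing two points of a line fixes that line setwise, so it permutes the points of $\ell$ within their $G_u$-orbits, after which transversality pins each point down. The only thing needing a little care is the bookkeeping of the hypotheses — Property $(\star)$ is used for the well-definedness of $\LS(G)$ as a linear space, while the transversality assumption is precisely what forces each line of $S$ to sit inside a single line $\Lambda_{uv}$.
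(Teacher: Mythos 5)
Your proof is correct and follows essentially the same argument as the paper: the key step in both is that $G_{uv}$ fixes the unique line $\ell$ of $S$ through $u$ and $v$, so any point $w\in\ell$ is moved by $G_{uv}$ within $\ell\cap w^{G_u}$, which transversality forces to be a single point. The only difference is cosmetic — you argue directly that each $g\in G_{uv}$ fixes $w$, whereas the paper phrases it as a contradiction with $|w^{G_{uv}}|\geq 2$.
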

\begin{proof}
Write $S=(\mathcal{P},\mathcal{L})$, let $\ell\in\mathcal{L}$ and let $u,v\in\ell$, with $u\neq v$. Recall that $\Lambda_{uv}=\{w\in\Omega\mid G_{uv}\leq G_w\}$ is the unique line of $\LS(G)$ containing $u$ and $v$. We must show that $\ell\subseteq\Lambda_{uv}$. Suppose, by contradiction, that $w\in\ell$ but  $w\not\in\Lambda_{uv}$. Since $w\notin \Lambda_{uv}$, $w$ is not fixed by $G_{uv}$, so $|w^{G_{uv}}|\geq 2$. On the other hand, $G_{uv}$ fixes $u$ and $v$ and thus the unique line of $S$ containing them, namely $\ell$. This implies that $w^{G_{uv}}\subseteq \ell$ and thus $|w^{G_u}\cap \ell|\geq 2$, contradicting the hypothesis that $(S,G)$ is transverse.
\end{proof}




We end this section by showing exhibiting a nice family of groups with Property $(\star)$. This will be useful in later sections.

\begin{lemma}\label{lemma:Has*}
Let $p$ be a prime and $d\geq 1$, let $H\leq \GammaL_1(p^d)$ and let $V\cong \C_p^d$ be the natural vector space for $\GammaL_1(p^d)$. Let $G$ be transitive on $V$ with point stabiliser $H$. If the equivalent conditions from Proposition~\ref{prop:SameOrbitSize} are satisfied, then $G$ has Property $(\star)$.
\end{lemma}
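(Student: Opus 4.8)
The plan is to work inside the identification of $V\setminus\{0\}$ with $\GL_1(p^d)\cong\C_{p^d-1}$ used in the proof of Proposition~\ref{prop:SameOrbitSize}, and to translate Property $(\star)$ into a statement about stabilisers of elements of $V$ under the group $G=V\rtimes H$. Since the hypothesis is that the equivalent conditions of Proposition~\ref{prop:SameOrbitSize} hold, we may use freely that $T=H\cap\GL_1(p^d)$ and $H$ have the same orbits on $V$, and that every nontrivial orbit has size $t=|T|$. The first reduction is to note that $G$ is transitive, so it suffices to verify the Property $(\star)$ implication for a convenient choice of the point $u$; the natural choice is $u=0$, since then $G_0=H$ and the stabiliser computations become computations inside $\GammaL_1(p^d)$ rather than the full affine group.

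**Key steps.** First I would record that for $v\in V\setminus\{0\}$, the stabiliser $G_{0v}=H_v$ is exactly the subgroup of $H$ fixing $v$; since $H$ acts on its orbit of size $t=|T|$ and $T$ is semiregular, $H_v$ is a complement to $T$ in $H$, i.e. a conjugate of $E\cong\C_e$, and in particular $|H_v|=e$ for all nonzero $v$. Next, for $w\in V$, the condition $G_{0v}\le G_w$ means $H_v$ fixes $w$. I would then show that $\Lambda_{0v}=\mathrm{Fix}_V(H_v)$ is a subgroup of $V$ (indeed an $\FF_{p^{d/e}}$-subspace, or at least a $\C_p$-subspace), using that $H_v$ acts on $V$ by additive maps (field automorphisms composed with scalars are additive). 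The heart of the argument is then: given $u\ne w$ with $G_{uv}\le G_w$, I must produce $G_{uw}\le G_v$. Reducing to $u=0$: $G_{0v}\le G_w$ says $w\in\mathrm{Fix}_V(H_v)=\Lambda_{0v}$; I then need $v\in\mathrm{Fix}_V(H_w)$. The point is to show that $H_v$ and $H_w$ coincide (as subgroups of $H$) whenever $w\in\mathrm{Fix}_V(H_v)$ and $w\ne 0$ — equivalently that $\mathrm{Fix}_V(H_v)$ is exactly the fixed space of the single subgroup $H_v$ and every nonzero vector in it has that same stabiliser. This should follow from a dimension/counting argument: $H_v\cong\C_e$, its fixed space has size $p^{d/e}$ when $e$ is the relevant prime (using $p^d-1=t(p^{d/e}-1)$ from condition (3), or more precisely that $f=\alpha^{p^{d/e}}$ fixes a coset structure), so $|\mathrm{Fix}_V(H_v)\setminus\{0\}|=p^{d/e}-1$; since all $t$-sized orbits partition $V\setminus\{0\}$ and $H_v$ has index $t$ complements, one checks the fixed sets of the $t$ conjugates of $E$ are pairwise equal or trivially intersecting, forcing $H_w=H_v$. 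The case $w=0$: then $G_{uw}=G_{u0}=H_u$ (after translating), and $G_{0v}\le G_0=H$ is automatic — wait, one must handle $w=0$ directly, but $\Lambda$ through $0$ containing $0$ is the generic line, so $G_{uw}\le G_v$ becomes $H_u\le G_v$ which holds because $\mathrm{Fix}$ of the line through $0$ is all of it; I'd phrase this cleanly via the subspace structure.

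**Main obstacle.** The delicate point is proving that $w\in\mathrm{Fix}_V(H_v)\setminus\{0\}$ forces $H_w=H_v$ — i.e. that the fixed subspaces of the (conjugates of) $\C_e$ inside $H$ form a partition-like structure on $V\setminus\{0\}$ with no unexpected containments. This is where conditions (2)/(3) of Proposition~\ref{prop:SameOrbitSize} (that $p^d-1=t(p^{d/e}-1)$ when $e\ge 2$, equivalently $m\mid p^{d/e}-1$ with $m=(p^d-1)/t$) must be used essentially: without it, $H_v$ could fix vectors lying in orbits of size $>t$, breaking the symmetry. I expect to argue via the explicit coset description: in the $\alpha$-coordinates, $H_v$-fixed nonzero vectors $\alpha^i$ satisfy $i(p^{d/e}-1)\equiv 0\pmod m$, and since $m\mid p^{d/e}-1$ this is automatic — no, that would fix everything; rather one uses that a \emph{specific} conjugate $H_v=\alpha^{-i}E\alpha^i$ fixes precisely $\alpha^i\langle\alpha^{(p^d-1)/\gcd}\rangle$, a single coset of a subgroup of order $p^{d/e}-1$, and these cosets for the $t$ distinct $i\bmod m$ values are exactly the $T$-orbit pattern. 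Pinning down this correspondence — that "$H_v$ fixes $w$" is symmetric in $v,w$ among nonzero vectors — is the crux; everything else (additivity, reduction to $u=0$, the $w=0$ boundary case) is bookkeeping.
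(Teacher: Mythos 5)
Your setup (reduce to $u=0$, note that Proposition~\ref{prop:SameOrbitSize}(\ref{SameSize}) forces every nonzero-point stabiliser in $H$ to have the same order $|H|/t=e$) already contains everything needed, but you then declare the remaining step --- that $w\in\mathrm{Fix}_V(H_v)\setminus\{0\}$ forces $H_w=H_v$ --- to be the crux and leave it as an unproven programme (``I expect to argue\dots'', ``one checks\dots''), proposing an explicit coset/fixed-space analysis in $\alpha$-coordinates whose details you do not pin down and where your own sketch wavers. The missing observation is that this step is immediate from what you have: if $G_{0v}\le G_w$ with $v,w\ne 0$, then since $G_{0v}=H_v\le H=G_0$ we get $H_v\le G_0\cap G_w=H_w$, and since $|H_v|=|H_w|$ (equal orbit sizes, which you recorded as $|H_v|=e$ for all nonzero $v$), the containment is an equality, so $G_{0w}=H_w=H_v\le G_v$. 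That one line is essentially the paper's entire proof; no additivity of the $H_v$-action, no structure of $\mathrm{Fix}_V(H_v)$, and no counting over conjugates of $E$ is required.

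Beyond the unfinished crux, several points in the sketch are off. Condition (\ref{count}) of Proposition~\ref{prop:SameOrbitSize} is the divisibility $p^d-1\mid t(p^{d/e}-1)$, not the equality $p^d-1=t(p^{d/e}-1)$; equality is only deduced in Theorem~\ref{theo:EPorbs} using primality of $t$, which is not assumed in this lemma (it is applied in Example~\ref{Example:refinement2} with $t=1057$ composite), so any argument leaning on that equality or on primality would not cover all cases. The claim that the fixed set of a conjugate of $E$ is exactly one coset of a subgroup of order $p^{d/e}-1$ is asserted, not proved, and your aside (``this is automatic --- no, that would fix everything'') shows the computation is not yet under control. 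The ``case $w=0$'' you try to handle cannot occur: the hypothesis is $u\ne w$, so after normalising $u=0$ we have $w\ne 0$; the only degenerate case is $v=u$, which is trivial because $G_{uw}\le G_u=G_v$. Finally, the assertion that $H_v$ is a conjugate of $E$ is neither justified (complements of $T$ in $H$ need not be conjugate a priori) nor needed. As written, the proposal has a genuine gap at its central step, even though the tools to close it in one line are already on the page.
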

\begin{proof}
Let $u,v,w$ be points such that $u\neq w$ and $G_{uv} \leq G_w$. We need to show that $G_{uw}\leq G_v$. This is clear if $u=v$, so we may assume that $u\neq v$. We can also assume without loss of generality that $u=0$ and thus $H=G_u$. We then have $H_v \leq H_w$ and $v,w\in V\setminus\{0\}$. It follows by Proposition~\ref{prop:SameOrbitSize} (\ref{SameSize}) that $|H_v|=|H_w|$ hence $H_w=H_v\leq G_v$, as required.
\end{proof}

\section{Proof of Theorem~\ref{Theo:ExtremelyPrimitiveLinearSpaces}}
\label{prf_EPSL}
We start with a few preliminary results.  Recall that the {\it rank} of a transitive permutation group is the number of orbits of a point stabiliser.

\begin{theorem}[{\cite[Theorem 1.1]{GZ}}]
\label{theo:GZ2}
Let $S$ be a regular linear space. If $G\leq \Aut(S)$ is extremely primitive on $\mathcal{P}$,  then $\rank(G) \geq 3$ with equality only if $S$ if the affine space $\mathrm{AG}_m(3)$ and $G \leq \AGammaL_1(3^m)$.
\end{theorem}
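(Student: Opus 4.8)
The plan is to prove the rank bound by a clean imprimitivity argument, then extract strong combinatorial structure in the equality case, and finally identify the group and the space using the classification of $3/2$-transitive groups together with a rigidity computation special to characteristic $3$. Throughout, fix a point $\alpha$, write $\Omega=\mathcal P\setminus\{\alpha\}$, and let $k$ denote the common line size (using regularity). Since every point of $\Omega$ is collinear with $\alpha$ on a unique line, the sets $B_\ell=\ell\setminus\{\alpha\}$, as $\ell$ runs over the lines through $\alpha$, partition $\Omega$ and are permuted by $G_\alpha$; nontriviality forces $|B_\ell|=k-1\ge 2$, and there are at least two parts (a single line through $\alpha$ would have to contain every point collinear with $\alpha$, hence be all of $\mathcal P$, contradicting that $S$ has at least two lines). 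If $\rank(G)=2$ then $G_\alpha$ is transitive on $\Omega$, so $\{B_\ell\}$ is a nontrivial block system, contradicting the primitivity of $G_\alpha$ on its unique nontrivial orbit. Hence $\rank(G)\ge 3$.

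Now suppose $\rank(G)=3$, say $\Omega=\Delta_1\sqcup\Delta_2$ with $G_\alpha$ primitive on each $\Delta_i$. Intersecting $\{B_\ell\}$ with $\Delta_i$ gives a $G_\alpha$-invariant partition of $\Delta_i$, which by primitivity is trivial: either every line through $\alpha$ meets $\Delta_i$ in at most one point, or a single line contains all of $\Delta_i$. I would rule out the latter: if $\Delta_1\subseteq\ell^*$ then $\ell^*$ is $G_\alpha$-invariant, so $\ell^*\cap\Delta_2\in\{\emptyset,\Delta_2\}$, giving either $\ell^*=\mathcal P$ (impossible) or $\ell^*=\{\alpha\}\cup\Delta_1$; the same analysis applied to $\Delta_2$ then leaves exactly two lines through $\alpha$, and a point of $\Delta_1$ together with a point of $\Delta_2$ spans a line meeting each of these in one point, hence of size $2$, contradicting $k\ge 3$. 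Thus every line through $\alpha$ meets each $\Delta_i$ in at most one point, so $k-1\le 2$ and $k=3$; counting lines through $\alpha$ then shows $|\Delta_1|=|\Delta_2|=(|\mathcal P|-1)/2$. In particular $S$ is a Steiner triple system, and since the two nontrivial suborbits have equal size, $G$ is $3/2$-transitive.

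Being $3/2$-transitive but, by rank $3$, not $2$-transitive, $G$ is constrained by the classification of $3/2$-transitive groups, whose non-$2$-transitive members are Frobenius groups, the groups $\mathscr G(p^d)$, subgroups of $\AGammaL_1(p^d)$, or finitely many exceptions (compare Theorem~\ref{theo:Passman}). The groups $\mathscr G(p^d)$ are not extremely primitive, and the finitely many sporadic degrees can be checked directly; in a Frobenius example the kernel is elementary abelian and the complement, acting regularly and primitively on each suborbit, is cyclic of prime order and hence lies in $\GammaL_1$. In every surviving case $G\le\AGammaL_1(p^d)$, and since $k=3$ the underlying field must be $\FF_3$, so $G\le\AGammaL_1(3^m)$ with $V\cong\FF_3^m$. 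By Theorem~\ref{theo:EPorbs} the two suborbits force $t=|T|=(3^m-1)/2$ to be a primitive prime divisor of $3^m-1$; as $(3^m-1)/2$ is composite for even $m$, this gives $m$ odd, so $t$ is odd and $-1\notin T$.

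It remains to show $S=\mathrm{AG}_m(3)$. With $V\le G$ and $T=\langle\zeta\rangle\le G_0$ transitive on each $\Delta_i$, the lines through $0$ are the $T$-images of a single base triple $B_0=\{0,x_0,y_0\}$ with $x_0\in\Delta_1$ and $y_0\in\Delta_2$. Translating $B_0$ by $-x_0$ produces another line through $0$, so $\{0,-x_0,y_0-x_0\}=\zeta^j B_0$ for some $j$; matching points by suborbit (note $-x_0\in\Delta_2$ since $-1\notin T$) yields $-x_0=\zeta^j y_0$ and $y_0-x_0=\zeta^j x_0$, and eliminating $y_0$ gives $\zeta^{2j}+\zeta^j+1=0$. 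In characteristic $3$ this reads $(\zeta^j-1)^2=0$, so $\zeta^j=1$ and $y_0=-x_0$. Hence every line through $0$ has the form $\{0,x,-x\}$, and translating by $V$ shows $S=\mathrm{AG}_m(3)$. The genuinely hard input is the classification of $3/2$-transitive groups invoked in the previous paragraph; by contrast the imprimitivity argument, the reduction to a Steiner triple system, and the characteristic-$3$ rigidity computation are all elementary.
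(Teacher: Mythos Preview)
The paper does not prove this theorem; it is quoted from \cite[Theorem~1.1]{GZ} and used as a black box, so there is no proof here to compare your argument against.

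Turning to your argument itself: the derivation of $\rank(G)\ge 3$, and of $k=3$ in the equality case, is essentially correct. You do skip the easy mixed sub-case in which $\Delta_1\subseteq\ell^*$ while every line through $\alpha$ meets $\Delta_2$ in at most one point; but then any line through $\alpha$ and a point of $\Delta_2$ has all of its remaining $k-2$ points in $\Delta_1\subseteq\ell^*$, forcing $k=2$, so this is easily patched.

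The genuine gap is the unsupported assertion ``since $k=3$ the underlying field must be $\FF_3$''. Take the Fano plane $S=\mathrm{PG}_2(2)$ with $G=\C_7\rtimes\C_3\le\AGL_1(7)$ acting via the Singer cycle and the multiplier $x\mapsto 2x$: this $G$ is extremely primitive of rank $3$ on the seven points, with $k=3$, yet $p=7$. Your rigidity computation correctly reaches $\zeta^{2j}+\zeta^j+1=0$, but outside characteristic $3$ this only says that $\zeta^j$ is a primitive cube root of unity; here $\zeta^j=2\in\FF_7$ works and the conclusion $\zeta^j=1$ fails. In fact this example appears to contradict the theorem exactly as stated, so either \cite{GZ} carries an additional hypothesis or the statement has been slightly mis-transcribed here; in any case your proof cannot be completed without addressing this.

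A secondary issue: Theorem~\ref{theo:Passman} classifies \emph{soluble} $3/2$-transitive groups, and you never verify that $G$ is soluble. Rank~$3$ primitive groups with equal subdegrees include insoluble almost simple examples, so you must either establish solubility first or invoke the broader classification of $3/2$-transitive groups and eliminate the insoluble cases separately.
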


\begin{proposition}[{\cite[Lemma 2.6]{GZ}}]
\label{prop:lineblocks}
Let $S$ be a linear space, let $G\leq\Aut(S)$ be  extremely primitive and let $(u,\ell)$ be a flag of $S$. If $\Delta$ is an orbit of $G_u$, then $|\ell \cap \Delta| \in \{0, 1, |\Delta| \}$.
\end{proposition}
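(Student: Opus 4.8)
The plan is to produce, for each nontrivial orbit $\Delta$ of $G_u$, a natural $G_u$-invariant partition of $\Delta$ whose classes are exactly the sets of the form $\ell'\cap\Delta$, where $\ell'$ ranges over the lines through $u$ that meet $\Delta$. Primitivity of $G_u$ on $\Delta$ then forces this partition to be trivial, which is precisely the claimed trichotomy.

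First I would dispose of the trivial cases: if $\ell\cap\Delta=\emptyset$ there is nothing to prove, and if $\Delta=\{u\}$ then $\ell\cap\Delta=\{u\}$ has size $1$ (recall $u\in\ell$ since $(u,\ell)$ is a flag). So assume $\Delta$ is a nontrivial orbit of $G_u$ — hence $u\notin\Delta$, as $u$ is fixed by $G_u$ — and $\ell\cap\Delta\neq\emptyset$; the goal is then to show $|\ell\cap\Delta|\in\{1,|\Delta|\}$. For each $v\in\Delta$ let $\ell_v$ be the unique line of $S$ through $u$ and $v$ (well-defined since $u\neq v$), and define a relation on $\Delta$ by $v\sim v'$ if and only if $v'\in\ell_v$. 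This is an equivalence relation whose classes are exactly the sets $\ell'\cap\Delta$ for lines $\ell'$ through $u$ meeting $\Delta$; note that $\ell$ is one such line, since $\ell\cap\Delta\neq\emptyset$ and $u\in\ell$.

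The crux is that $\sim$ is $G_u$-invariant. Any $g\in G_u$ fixes $u$ and is an automorphism of $S$, so it carries the line $\ell_v$ to a line through $u$ and $v^g$; by uniqueness $\ell_v^{\,g}=\ell_{v^g}$, and therefore $v\sim v'\iff v^g\sim v'^{\,g}$. Consequently the $\sim$-classes form a block system for the action of $G_u$ on $\Delta$. Since $G$ is extremely primitive and $\Delta$ is a nontrivial orbit, $G_u$ acts primitively on $\Delta$, so the only block systems are the partition into singletons and the partition whose single block is $\Delta$. In the first case every class, in particular $\ell\cap\Delta$, is a singleton; in the second case $\ell\cap\Delta=\Delta$ because $\ell$ is a line through $u$ meeting $\Delta$. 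Either way $|\ell\cap\Delta|\in\{1,|\Delta|\}$.

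There is essentially no obstacle here: the argument is purely combinatorial, the only external input being that a primitive group has no nontrivial block system. I would highlight that the one genuinely substantive choice is recognising the correct invariant partition. One could instead try to argue via maximality of point stabilisers — if $w\in\ell\cap\Delta$ then $G_{uw}$ fixes $u$ and $w$, hence the line $\ell$, hence $\ell\cap\Delta$ setwise, and $G_{uw}$ is maximal in $G_u$ by primitivity — but that route requires some extra care to conclude that the setwise stabiliser of $\ell\cap\Delta$ being $G_{uw}$ actually forces $|\ell\cap\Delta|=1$ (the set $\ell\cap\Delta$ need not be a block on the nose), whereas the block-system formulation above makes invariance manifest and avoids this subtlety.
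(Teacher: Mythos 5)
Your argument is correct, and it is essentially the standard proof of this fact; note that the paper itself does not reprove the proposition but imports it from \cite[Lemma 2.6]{GZ}. In fact $\ell\cap\Delta$ is already a block for $G_u$ on $\Delta$ on the nose: for $g\in G_u$ either $\ell^g=\ell$ or $\ell^g\cap\ell=\{u\}$, and $u\notin\Delta$, so $(\ell\cap\Delta)^g$ is equal to or disjoint from $\ell\cap\Delta$ and primitivity gives the trichotomy directly --- so the ``extra care'' you worry about in your closing remark is not actually needed, and your partition $\{\ell'\cap\Delta\}$ is just the block system generated by this block.
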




Our starting point is the following theorem.

\begin{theorem}[{\cite[Corollary 1.4]{GZ}}]
\label{theo:GZ}
Let $S$ be a nontrivial regular linear space. If $G\leq \Aut(S)$ is extremely primitive, then one of the following holds:
\begin{enumerate}
\item \label{AffinePrimitive} $G$ is primitive of affine type;
\item \label{exc} $G$ is an almost simple exceptional group of Lie type;
\item \label{PSL} $G = \PSL_2(2^{2^n})$ with point stabiliser $\D_{2(2^{2^n}+1)}$, where $2^{2^n}+1$ is a Fermat prime.
\end{enumerate}
\end{theorem}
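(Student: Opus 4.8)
The plan is to derive the trichotomy by feeding the completed classification of extremely primitive groups into the combinatorial constraints that a nontrivial regular linear space imposes on the orbits of a point stabiliser. Since an extremely primitive group is primitive, the O'Nan--Scott theorem applies, and by the structural reductions underpinning the classification (beginning with \cite{MPS}) one knows that such a $G$ is of affine or almost simple type; the product action, diagonal and twisted wreath types either do not occur for extremely primitive groups or fail to preserve a nontrivial regular linear space. The affine case is precisely conclusion (\ref{AffinePrimitive}), so I would set it aside and assume henceforth that $G$ is almost simple with socle $T$ and point stabiliser $M=G_u$. By Theorem~\ref{theo:GZ2} we may also assume $\rank(G)\ge 3$, which removes all the $2$-transitive examples. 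The pair $(G,M)$ then ranges over the explicit list assembled in \cite{BPS1, BPS2, BT, BT2}, organised according to whether $T$ is alternating, sporadic, classical, or exceptional of Lie type. Exceptional type is conclusion (\ref{exc}), so the remaining task is to show that among the alternating, sporadic and classical examples the only ones admitting a nontrivial regular linear space are $\PSL_2(q)$ with $q=2^{2^n}$ and $M=\D_{2(q+1)}$.

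The engine for the elimination is Proposition~\ref{prop:lineblocks}: for every flag $(u,\ell)$ and every $G_u$-orbit $\Delta$ we have $|\ell\cap\Delta|\in\{0,1,|\Delta|\}$. Thus a line through $u$ consists of $u$ together with a union of entire $G_u$-orbits and of transversal points taken one from each of certain orbits, and in a regular space of line size $k$ this total must equal $k-1$ for \emph{every} line. Coupling this with the standard $2$-$(v,k,1)$ identities $r(k-1)=v-1$ and $b=v(v-1)/(k(k-1))$, where $v=|G:M|$, together with Fisher's inequality $b\ge v$, produces rigid divisibility and size conditions linking $k$ and $v$ to the known subdegrees of $(G,M)$. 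My plan is to test these conditions family by family: for each candidate one checks whether some admissible $k\ge 3$ makes $r$ and $b$ integral and realises $k-1$ as a legal combination of full orbits and transversal contributions, and then whether such a line set can be realised $G$-invariantly. In essentially every case the arithmetic is incompatible and the group is eliminated. The sole survivor is the $\PSL_2(q)$ family: here the nontrivial $G_u$-orbits all have size $q+1$, so extreme primitivity forces $q+1=2^{2^n}+1$ to be a (Fermat) prime, and a regular line meeting each such orbit in exactly one point has size $k=q/2$ with $r=q+1$ and $v=q(q-1)/2$, which satisfies the design identities and yields conclusion (\ref{PSL}).

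The principal obstacle is the classical case, since these form infinite families whose subdegrees depend on the defining parameters, so the elimination must proceed by uniform arithmetic rather than finite checking, and isolating precisely the $\PSL_2(2^{2^n})$ survivor requires careful control of the Fermat-prime condition, which simultaneously governs when the dihedral stabiliser is maximal and when it acts primitively on its orbits of size $q+1$. By contrast, the sporadic and bounded alternating cases are finite and can be settled by direct computation, while the exceptional groups are carried unchanged into conclusion (\ref{exc}).
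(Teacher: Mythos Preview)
The paper does not give its own proof of this statement: Theorem~\ref{theo:GZ} is quoted verbatim as \cite[Corollary~1.4]{GZ} and used as a black box to launch the proof of Theorem~\ref{Theo:ExtremelyPrimitiveLinearSpaces}. There is therefore no ``paper's own proof'' to compare your proposal against.

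That said, your outline is a plausible reconstruction of the strategy one would expect in \cite{GZ}, and indeed the auxiliary tools you invoke (Theorem~\ref{theo:GZ2} and Proposition~\ref{prop:lineblocks}) are themselves results from \cite{GZ} quoted in the present paper. However, what you have written is a plan rather than a proof: the sentence ``in essentially every case the arithmetic is incompatible and the group is eliminated'' stands in for the entire substance of the argument, namely the family-by-family elimination across the alternating, sporadic and classical extremely primitive groups, none of which you actually carry out. You yourself correctly identify the classical families as the principal obstacle, and that is precisely where your sketch offers no details. So nothing you say is wrong, but what you have is an accurate table of contents for a proof, not the proof itself.
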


For the rest of this section, we will assume the hypothesis of Theorem~\ref{theo:GZ}. We will then deal with each case in the conclusion in turn and classify the linear spaces $S=(\mathcal{P},\mathcal{L})$ that arise. First, we introduce some basic terminology and results which will often be useful. Write $v = |\mathcal{P}|$ and $b = |\mathcal{L}|$. Since $S$ is regular, all its lines have the same size, which we will denote by $k$. It can be shown (see \cite[Lemma 2.1]{CNP}, for example) that there is also a constant number of lines $r$ meeting each point and the following holds:
\[
 r = \frac{v-1}{k-1} \qquad b = \frac{v(v-1)}{k(k-1)} \qquad v\geq k(k-1)+1.
\]
Moreover, it follows by Fisher's inequality that $b\geq v$ and therefore $r\geq k$.

First, let $G$ be as in Theorem \ref{theo:GZ} (\ref{exc}), that is, an extremely primitive almost simple exceptional group of Lie type, and let $H$ be its point stabiliser. By \cite[Theorem 1]{BT} $(G,H)$ is one of $(\mathrm{G}_2(4),\mathrm{J}_2)$ or $(\mathrm{G}_2(4).2,\mathrm{J}_2.2)$. In each case, the corresponding permutation group has rank $3$ and it then follows by Theorem~\ref{theo:GZ2} that no regular linear space arises in this case.

Next, let $G$ be as in Theorem~\ref{theo:GZ} (\ref{PSL}). Write $q=2^{2^n}+1$. By \cite[Proposition 5.3]{BPS1}, the nontrivial subdegrees of $G$ are all $q$. Since the point stabilisers are isomorphic to $\D_{2q}$, it follows that, given two distinct points $u$ and $v$, $|G_{uv}|=2$. In particular, if  $u\neq w$ and $G_{uv} \leq G_w$, then $G_{uw}  = G_{uv}$, since both groups have order $2$. This shows that $G$ satisfies Property $(\star)$.   Now, let $(u,\ell)$ be a flag of $S$ and $\Delta$ be an orbit of $G_u$ that meets $\ell$. By Proposition~\ref{prop:lineblocks}, $|\ell\cap\Delta|\in\{1,q\}$. If $|\ell\cap\Delta|=q$, then $k\geq q+1$ but $v=|\PSL_2(q-1):\D_{2q}|=(q-1)(q-2)/2$, contradicting the fact that $v\geq k(k-1)+1$. It follows that $(S,G)$ is transverse and we can apply Proposition \ref{prop:refinement} to conclude that $S$ is a refinement of $\LS(G)$, as in Theorem~\ref{Theo:ExtremelyPrimitiveLinearSpaces} (\ref{psl:main}).




It remains to deal with the case when $G$ is as in Theorem~\ref{theo:GZ} (\ref{AffinePrimitive}), that is, an extremely primitive group of affine type. These groups were previously classified in~\cite[Theorem 1]{BL} but this classification relied on the incorrect \cite[Theorem 1.2]{MPS}. Here is then an updated classification of these groups.

\begin{theorem}\label{theo:EPAffine}
Let $p$ be a prime and $d\geq 1$, let $H\leq \GL_d(p)$, let $V$ be the natural vector space for $\GL_d(p)$ and let $G=V\rtimes H$. If $G$ is extremely primitive, then
 one of the following holds:
\begin{enumerate}\addtolength{\itemsep}{0.2\baselineskip}
\item\label{soluble} $G$ is as in Theorem \ref{theo:mainEP};
\item \label{natmod} $p=2$ and $H = \PSL_{d}(2)$ with $d \geq 3$, or $H={\rm Sp}_{d}(2)$ with $d \geq 4$;
\item \label{spor} $p=2$ and $(d,H) = (4,\Alt_6)$, $(4, \Alt_7)$, $(6, \PSU_{3}(3))$ or $(6,\PSU_{3}(3).2)$;
\item  \label{spor2} $p=2$ and $(d,H)$ is one of the following:
\[
\begin{array}{llll}
(10,{\rm M}_{12}) & (10,{\rm M}_{22}) & (10,{\rm M}_{22}.2) & (11,{\rm M}_{23}) \\
(11,{\rm M}_{24}) & (22, {\rm Co}_{3}) &  (24,{\rm Co}_{1}) & (2k, \Alt_{2k+1}) \\
(2k, \Sym_{2k+1}) &  (2\ell, \Alt_{2\ell+2}) & (2\ell, \Sym_{2\ell+2}) & (2\ell, \Omega_{2\ell}^{\pm}(2)) \\
(2\ell, {\rm O}_{2\ell}^{\pm}(2)) &  (8, \PSL_{2}(17)) & (8, {\rm Sp}_{6}(2)) & 
\end{array}
\]
where $k \geq 2$ and $\ell \geq 3$.
\end{enumerate}
\end{theorem}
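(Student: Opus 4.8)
The plan is to combine the general classification of extremely primitive affine groups from the literature with our corrected Theorem~\ref{theo:mainEP}. The starting point is that the classification of extremely primitive affine groups in \cite[Theorem~1]{BL} is almost entirely correct: its only flaw is that it invokes the flawed \cite[Theorem~1.2]{MPS} for the soluble case. So the strategy is: first recall the structure of the argument in \cite{BL}, isolate precisely where the soluble classification is used, and then observe that every case in the statement of \cite[Theorem~1]{BL} other than the ``soluble'' line is unaffected and carries over verbatim; these are exactly cases~(\ref{natmod}), (\ref{spor}) and (\ref{spor2}) above. Then replace the soluble line of \cite[Theorem~1]{BL} with case~(\ref{soluble}), which is now governed by our Theorem~\ref{theo:mainEP} (equivalently Theorem~\ref{theo:EPorbs}).

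More concretely, I would argue as follows. Suppose $G = V \rtimes H \leq \AGL_d(p)$ is extremely primitive. By \cite[Theorem~1]{BL} (whose proof divides according to the O'Nan--Scott-type analysis of the point stabiliser acting on $V$, i.e.\ whether $H$ is contained in $\GammaL_1(p^d)$, or lies in a class where $H$ acts irreducibly but not semilinearly, etc.), either $H \leq \GammaL_1(p^d)$ with $G$ soluble, or $(p,d,H)$ is one of the explicitly listed non-soluble possibilities, which are precisely those appearing in (\ref{natmod})--(\ref{spor2}). The non-soluble cases use only results about simple or almost simple groups and 2-transitive/$p$-exceptional linear group classifications, none of which depends on \cite{MPS}; hence those conclusions stand. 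In the remaining case, $G$ is soluble (indeed $H \leq \GammaL_1(p^d)$ forces solubility since $\GammaL_1(p^d)$ is soluble, and conversely the soluble extremely primitive groups all embed this way by Theorem~\ref{theo:mainEP}), so Theorem~\ref{theo:mainEP} applies and gives exactly case~(\ref{soluble}). This exhausts all possibilities.

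I would also add a short remark reconciling notation: \cite[Theorem~1]{BL} records the soluble affine extremely primitive groups with point stabiliser $\C_t \rtimes \C_e \leq \GammaL_1(p^d)$ subject to the (incorrect) conditions from \cite[Theorem~1.2]{MPS}; the only change here is that the admissible pairs $(t,e)$ are now those of Theorem~\ref{theo:mainEP}, namely $t$ a primitive prime divisor of $p^d-1$ together with the constraint ``$e=1$, or $e$ is a prime dividing $d$ with $t=(p^d-1)/(p^{d/e}-1)$.'' No new linear-algebra or group-theoretic work is required beyond what is already in \cite{BL} and in our Section~2; the proof is essentially a matter of citing \cite[Theorem~1]{BL} for the non-soluble part and substituting Theorem~\ref{theo:mainEP} for the soluble part.

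The main obstacle, and the one point requiring genuine care rather than citation, is making sure that the case division in \cite{BL} really does quarantine the dependence on \cite{MPS} to the soluble line and nowhere else --- in particular, checking that none of the borderline families (small-dimensional sporadic stabilisers, or the $\Omega_{2\ell}^\pm(2)$ and $\Alt/\Sym$ families) secretly rely on the erroneous sufficiency claim of \cite[Theorem~1.2]{MPS} when verifying extreme primitivity. I expect this to be straightforward upon inspection, since those families are handled by direct subdegree computations or by the structure of maximal subgroups, but it is the step that must be stated explicitly rather than waved through. A secondary, purely cosmetic point is to confirm that every group listed in (\ref{natmod})--(\ref{spor2}) is indeed extremely primitive (the ``if'' direction for the non-soluble cases), which is again already established in \cite{BL, BT} and needs only to be cited.
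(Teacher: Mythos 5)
Your proposal is correct and matches the paper's (implicit) argument: the paper presents Theorem~\ref{theo:EPAffine} simply as the statement of \cite[Theorem 1]{BL} with the soluble line replaced by the corrected Theorem~\ref{theo:mainEP}, exactly as you propose, since the non-soluble cases of \cite{BL} do not depend on \cite[Theorem 1.2]{MPS}. Your added care about verifying that the dependence on \cite{MPS} is confined to the soluble case is a reasonable precaution, but the paper treats this as immediate and offers no further proof.
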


In the rest of this section, we go through the cases in Theorem~\ref{theo:EPAffine} and, for each case, classify regular linear spaces admitting such groups of automorphisms.

\subsection{Groups arising in Theorem \ref{theo:EPAffine} (\ref{natmod}) or (\ref{spor}).}

The groups in Theorem~\ref{theo:EPAffine}  (\ref{natmod})  are $2$-transitive by \cite[Lemma 2.10.5]{KL}, while those in Theorem~\ref{theo:EPAffine}  (\ref{spor}) are found to be $2$-transitive by direct computation. It follows by Theorem~\ref{theo:GZ2} that no regular linear space arises in this case.


\subsection{Groups arising in Theorem \ref{theo:EPAffine} (\ref{spor2}).}
Adopt the notation of Theorem \ref{theo:EPAffine} (\ref{spor2}). Let $u$ be a point such that $H=G_u$.

We first deal with the infinite families of groups.  If $H= \Omega^\pm_{2l}(2)$ or $\mathrm{O}^\pm_{2l}(2)$, then $G$ has rank 3 by \cite[Lemma 2.10.5]{KL} and by Theorem \ref{theo:GZ2}  no regular linear space arises in this case. Now suppose that $(d,H)$ is one of $(2k, \Alt_{2k+1}),
(2k, \Sym_{2k+1}),  (2\ell, \Alt_{2\ell+2})$ or $ (2\ell, \Sym_{2\ell+2})$, and for conciseness,  let $n=2k+1$ or $2\ell+2$ as appropriate.
In these cases, $H=G_u$ is an alternating or symmetric group acting on the \textit{fully deleted permutation module} $U$. That is,  the subspace $V_0$ of $V_n(2)$ spanned by vectors whose entries sum to 0 if $n$ is odd, or the quotient of $V_0$ by the subspace spanned by the all-ones vector if $n$ is even; $H$  acts by permuting the coordinates of elements of $V_0$, with the corresponding induced action on $U$. The orbits of $H$ on $V_0$ are indexed by the weight of the vectors in it, which must be even, so $H$ has $\lfloor n/2 \rfloor +1$ orbits on $V_0$. When $n$ is even, the orbits of weight $x$ and $n-x$ in $V_0$ get identified in $U$, so $H$ has $\lfloor n/4 \rfloor +1$ orbits on $U$ when $n$ is even. Note that the smallest nontrivial orbit has size $\binom{n}{2}$ so, since $n\geq 5$, it follows that the size of nontrivial orbits is larger than the number of orbits. Let $x$ be the number of orbits of $H$ on $U$ and let $\Delta$ be a nontrivial orbit of $H$ on $U$. We have just shown that $|\Delta|\geq x$. It follows that 
\begin{align}\label{counting}
k(k-1)\leq v-1\leq x|\Delta|\leq |\Delta|^2.
\end{align}
Let $\ell$ be a line meeting $u$ and $\Delta$.  By Proposition \ref{prop:lineblocks}, $|\ell \cap \Delta|$ is equal to $1$ or $|\Delta|$. If $|\ell \cap \Delta|=|\Delta|$, then $k \geq |\Delta|+1$, contradicting (\ref{counting}). We may thus assume that $|\ell \cap \Delta| = 1$. This implies that the orbit of $\ell$ under $H$ has length $|\Delta|$. Since $S$ is nontrivial, there must be another nontrivial orbit $\Delta_1$ of $H$ meeting $\ell$. Repeating the argument above, we find that the orbit of $\ell$ under $H$ has length $|\Delta_1|$, so $|\Delta|=|\Delta_1|$. One can apply this to $\Delta$, the smallest nontrivial orbit. As mentioned earlier,  it has size $\binom{n}{2}$ and one can check that it is the only orbit of that size, which contradicts $|\Delta|=|\Delta_1|$.


It remains to consider the sporadic cases. We give the argument for $(d,H) = (10,{\rm M}_{12})$ here; the others are similar. In this case, $v = 2^{10}$. Recall that $k$ must be an integer such that $3\leq k<v$, $k-1$ divides $v-1$ and $k(k-1)$ divides $v(v-1)$. We find that $k$ is one of $4$, $12$ or $32$, and the corresponding values for $r$ are $341$, $93$ and $33$.
Let $\ell$ be a line through $u$.  If $\Delta$ is a nontrivial orbit of $H$ meeting $\ell$, then by Proposition \ref{prop:lineblocks},  $|\ell \cap \Delta|$ is equal to $1$ or $|\Delta|$. By direct computation, we find that $H$ has orbit lengths $1,66,66,396,495$.  If  $|\ell \cap \Delta| =|\Delta|$, then $k\geq 1+|\Delta| \geq 67$, a contradiction.  It follows that  $|\ell \cap \Delta| = 1$ and the orbit of $\ell$ under $H$ has length $|\Delta|$. Repeating this argument for other lines through $u$, we conclude that $r$ is a linear combination of the nontrivial orbit lengths of $H$, which is a contradiction.

\subsection{Groups arising in Theorem \ref{theo:EPAffine} (\ref{soluble}).}\label{sec:what}
It remains to deal with the groups arising in Theorem \ref{theo:EPAffine} (\ref{soluble}). Let $G$, $p$, $d$, $t$, $e$ be as in Theorem \ref{theo:mainEP}.  By Lemma~\ref{lemma:Has*}, $G$ has Property $(\star)$ and it follows by Proposition \ref{prop:starLS} that  $\LS(G)$ is a linear space.

If $e=1$, then Theorem~\ref{Theo:ExtremelyPrimitiveLinearSpaces} (\ref{e=1}) holds. We thus assume that $e\geq 2$. Our next goal is to show that $(S,G)$ is transverse. Let $(u,\ell)$ be a flag of $S$ and $\Delta$ be a nontrivial orbit of $G_u$. By Theorem \ref{theo:EPorbs}, $|\Delta|=t$. By Proposition \ref{prop:lineblocks}, $|\ell\cap\Delta|$ is equal to one of $0$, $1$ or $t$. In view of a contradiction, we can assume that $|\ell\cap\Delta|=t$.  This implies that $k\geq t+1$ and, by Fisher's inequality, $r\geq t+1$. Since $e\geq 2$, we have
\[
p^d-1 =v-1= r(k-1) > t^2 =\left(\frac{p^d-1}{p^{d/e}-1} \right)^2\geq\left( \frac{p^d-1}{p^{d/2}-1}\right)^2 =  \left(p^{d/2}+1\right)^2,
\]
which is a contradiction.  This concludes the proof that $(S,G)$ is transverse. We can now apply Proposition \ref{prop:refinement} to conclude that $S$ is a refinement of $\LS(G)$, as in Theorem~\ref{Theo:ExtremelyPrimitiveLinearSpaces} (\ref{e=2}).

\section{Refinements of line-transitive spaces}\label{sec:ref}

In this section, we give a construction for refinements of line-transitive linear spaces such that the line-transitive group also acts on the refined space. We also show that all such refined spaces arise in this way.

Given a line $\ell$ of a linear space $S=(\mathcal{P},\mathcal{L})$, we write $\mathcal{P}(\ell)$ for the set of points of $S$ incident with $\ell$. (In most of this paper, we simply identify $\ell$ with $\mathcal{P}(\ell)$, but we avoid this in this section to reduce possible confusion.) Given $G\leq \mathrm{Sym}(\Omega)$, we write $G_\ell$ and $G_{[\ell]}$ for the subgroup of $G$ preserving $\mathcal{P}(\ell)$ setwise and pointwise, respectively. We also write $G_\ell^\ell$ for the permutation group induced by $G_\ell$ on $\mathcal{P}(\ell)$.

\begin{construction}\label{consSpace}
The input of the construction is the following:
\begin{enumerate}
\item \label{hyp:1} A linear space $S=(\mathcal{P},\mathcal{L})$  with $G\leq\Aut(S)$ such that $G$ is transitive on $\mathcal{L}$, and a line  $\ell\in \mathcal{L}$.
\item \label{hyp:2} A linear space $T=(\mathcal{P}(\ell),\mathcal{TL})$ such that $G_\ell^\ell\leq \Aut(T)$.
\end{enumerate}
The output of the construction is an incidence structure $R=\R(\mathcal{P},\mathcal{L},G,\ell,\mathcal{TL})$. The set of points of $R$ is $\mathcal{P}$ while the set of lines of $R$ is $\{t^g\mid t\in\mathcal{TL}, g\in G\}$.
\end{construction}

\begin{proposition}\label{prop:OutputIsRef}
Using the notation of Construction~\ref{consSpace}, the output  $R$ of the construction is a linear space which is a refinement of $S$ and with $G\leq\Aut(R)$.
\end{proposition}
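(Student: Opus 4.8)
The plan is to verify three things in order: that $R$ is a linear space, that it refines $S$, and that $G$ acts on it by automorphisms. I would start with the refinement property since it is the conceptual heart and makes the other two easier. Every line of $R$ is of the form $t^g$ with $t\in\mathcal{TL}$ and $g\in G$. Since $T$ is a linear space on point set $\mathcal{P}(\ell)$, each $t$ is a subset of $\mathcal{P}(\ell)$, hence $t^g\subseteq\mathcal{P}(\ell)^g=\mathcal{P}(\ell^g)$, which is a line of $S$ (as $g\in\Aut(S)$). So every line of $R$ is contained in a line of $S$; once we know $R$ is a linear space this is exactly the definition of a refinement.

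For the linear-space axiom I need: any two distinct points $x,y\in\mathcal{P}$ lie on exactly one line of $R$. For existence, let $m$ be the unique line of $S$ through $x$ and $y$. By line-transitivity of $G$ on $\mathcal{L}$ there is $g\in G$ with $\ell^g=m$, equivalently $m^{g^{-1}}=\ell$, so $x^{g^{-1}},y^{g^{-1}}\in\mathcal{P}(\ell)$; since $T$ is a linear space there is $t\in\mathcal{TL}$ with $x^{g^{-1}},y^{g^{-1}}\in t$, and then $t^g$ is a line of $R$ containing $x$ and $y$. For uniqueness, suppose $t_1^{g_1}$ and $t_2^{g_2}$ are lines of $R$ both containing $x$ and $y$. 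Each is contained in a line of $S$ through $x$ and $y$, and that line is unique, namely $m$; hence $\ell^{g_1}=m=\ell^{g_2}$, so $h:=g_1g_2^{-1}$ stabilises $\mathcal{P}(\ell)$ setwise, i.e.\ $h\in G_\ell$. Pull everything back by $g_1$: inside $\mathcal{P}(\ell)$ the sets $t_1$ and $t_2^{g_2g_1^{-1}}=t_2^{h^{-1}}$ both contain the distinct points $x^{g_1^{-1}},y^{g_1^{-1}}$. Now $h^{-1}\in G_\ell$ acts on $\mathcal{P}(\ell)$ as an element of $G_\ell^\ell\leq\Aut(T)$, so $t_2^{h^{-1}}\in\mathcal{TL}$; since $T$ is a linear space, $t_1=t_2^{h^{-1}}$, and applying $g_1$ gives $t_1^{g_1}=t_2^{h^{-1}g_1}=t_2^{g_2}$, so the two lines of $R$ coincide.

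Finally, $G\leq\Aut(R)$ is immediate: $G$ preserves $\mathcal{P}$, and for a line $t^g$ of $R$ and $x\in G$ we have $(t^g)^x=t^{gx}$, again of the required form, so $G$ permutes the lines of $R$. (One should also note $R$ has at least two points, hence the action on points determines automorphisms; but since we only claim $G\leq\Aut(R)$ this is enough.) The one place that needs genuine care — and the step I would flag as the main obstacle — is the uniqueness argument: it relies on the observation that $G_\ell$ permutes $\mathcal{TL}$, which is exactly hypothesis~(\ref{hyp:2}) that $G_\ell^\ell\leq\Aut(T)$, together with the fact that two lines of $R$ through the same pair of points must come from the \emph{same} line of $S$. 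Everything else is bookkeeping with the group action, but one must be careful that "line of $R$ contained in a line of $S$" forces the $S$-line to be the unique one through $x$ and $y$, so that the two group elements $g_1,g_2$ can be compared via their common image $\ell^{g_i}=m$.
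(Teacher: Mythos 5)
Your proof is correct and follows essentially the same route as the paper: existence of a line through two points via line-transitivity of $G$ and the linear space $T$, uniqueness by showing the comparison element lies in $G_\ell$ and invoking $G_\ell^\ell\leq\Aut(T)$ to place both candidate lines in $\mathcal{TL}$, and the refinement and $G\leq\Aut(R)$ claims exactly as in the paper. No gaps to flag.
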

\begin{proof}
It is clear from the construction that $G\leq\Aut(R)$. We first show  that $R$ is a linear space. Let $u,v\in\mathcal{P}$, $u\neq v$.  There exists $\ell_{uv}\in\mathcal{L}$ such that $u,v\in \ell_{uv}$. Since $G$ is transitive on $\mathcal{L}$, there exists $g\in G$ such that $\ell_{uv}^g=\ell$, so $u^g,v^g\in\mathcal{P}(\ell)$. Since $T$ is a linear space, there is $t\in\mathcal{TL}$ such that $u^g,v^g\in t$ so $t^{g^{-1}}$ is a line of $R$ containing $u$ and $v$. Now, let $k$ be a line of $R$ containing $u^g$ and $v^g$. By definition, there exists $h\in G$ such that $k^h\in \mathcal{TL}$, so $u^{gh},v^{gh}\in \mathcal{P}(\ell)$. Since $\ell$ is the unique line of $\mathcal{L}$ containing $u^g$ and $v^g$, we have $\ell^h=\ell$,  hence $h\in G_\ell$ and $h^{\ell}\in G_\ell^\ell\leq \Aut(T)$. It follows that  $h^\ell$ preserves $\mathcal{TL}$ and thus $k\in \mathcal{TL}$. Since $T$ is a linear space, it follows that there is a unique line of $R$ containing $u^g$ and $v^g$ (namely $k$), and the same holds for $u$ and $v$. We have shown that  $u,v$ are on a unique line in $R$ so $R$ is a linear space.

We now show that $R$ is a refinement of $S$. Let $k$ be a line of $R$. By definition, there exist $t\in\mathcal{TL}$ and  $g\in G$ such that $k=t^g$. By definition,  $t\subseteq \ell$ hence $k=t^g\subseteq\ell^g\in \mathcal{L}$, as required.
\end{proof}

\begin{proposition}\label{prop:EveryRef}
	Let $S=(\mathcal{P},\mathcal{L})$, $G$ and $\ell$ be as in Construction~\ref{consSpace} (\ref{hyp:1}). If $R$ is a refinement of $S$ with $G\leq \Aut(R)$, then there exists a linear space $T=(\mathcal{P}(\ell),\mathcal{TL})$ with $G_\ell^\ell\leq \Aut(T)$ such that $R=\R(\mathcal{P},\mathcal{L},G,\ell,\mathcal{TL})$.
\end{proposition}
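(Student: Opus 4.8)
The plan is to recover the ``template'' linear space $T$ on the point set $\mathcal{P}(\ell)$ by simply restricting the line structure of $R$ to $\ell$. Concretely, I would define $\mathcal{TL} = \{\, k \cap \mathcal{P}(\ell) \mid k \text{ a line of } R,\ |k \cap \mathcal{P}(\ell)| \geq 2 \,\}$ (identifying lines of $R$ with their point sets), and set $T = (\mathcal{P}(\ell), \mathcal{TL})$. The three things to check are: (i) $T$ is a linear space; (ii) $G_\ell^\ell \leq \Aut(T)$; and (iii) $\R(\mathcal{P},\mathcal{L},G,\ell,\mathcal{TL}) = R$.

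For (i): given two distinct points $u, v \in \mathcal{P}(\ell)$, there is a unique line $k$ of $R$ through them (since $R$ is a linear space), and since $R$ refines $S$ we have $k \subseteq \ell'$ for some line $\ell'$ of $S$; but $u, v \in \ell$ forces $\ell' = \ell$ (uniqueness of lines in $S$), so $k \subseteq \mathcal{P}(\ell)$ and hence $k = k \cap \mathcal{P}(\ell) \in \mathcal{TL}$. Thus the unique line of $R$ through $u, v$ lies entirely in $\ell$ and is the unique element of $\mathcal{TL}$ containing both; moreover every element of $\mathcal{TL}$ through $u$ and $v$ is of this form. So $T$ is a linear space. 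A useful byproduct, worth recording explicitly, is that $\mathcal{TL}$ is exactly the set of lines of $R$ that happen to be contained in $\mathcal{P}(\ell)$; the ``$\cap \mathcal{P}(\ell)$'' in the definition is cosmetic.

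For (ii): if $g \in G_\ell$, then $g$ permutes $\mathcal{P}(\ell)$ and, being an automorphism of $R$, sends lines of $R$ contained in $\mathcal{P}(\ell)$ to lines of $R$ contained in $\mathcal{P}(\ell)$, i.e. it preserves $\mathcal{TL}$; since the $G_\ell$-action on $\mathcal{P}(\ell)$ factors through $G_\ell^\ell$ (the kernel $G_{[\ell]}$ acts trivially and so certainly preserves $\mathcal{TL}$), we get $G_\ell^\ell \leq \Aut(T)$. For (iii): by Proposition~\ref{prop:OutputIsRef}, $R' := \R(\mathcal{P},\mathcal{L},G,\ell,\mathcal{TL})$ is a linear space refining $S$ with the same point set; since two linear spaces on the same point set are equal as soon as one refines the other and they have the same number of lines, it suffices to show $R'$ refines $R$ (or simply that every line of $R'$ is a line of $R$ and vice versa). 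One inclusion: a line of $R'$ has the form $t^g$ with $t \in \mathcal{TL}$, and $t$ is a line of $R$, so $t^g$ is a line of $R$ since $G \leq \Aut(R)$. Conversely, given a line $k$ of $R$, pick a line $\ell_k$ of $S$ containing $k$ (refinement) and $g \in G$ with $\ell_k^g = \ell$ (transitivity of $G$ on $\mathcal{L}$); then $k^g$ is a line of $R$ contained in $\mathcal{P}(\ell)$, hence $k^g \in \mathcal{TL}$, hence $k = (k^g)^{g^{-1}}$ is a line of $R'$. Thus $R$ and $R'$ have exactly the same lines and $R = \R(\mathcal{P},\mathcal{L},G,\ell,\mathcal{TL})$.

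The only mildly delicate point — and the place I would be most careful — is the bookkeeping in (iii) when $g$ moves $\ell_k$ to $\ell$: one must check that $k^g$ really is contained in $\mathcal{P}(\ell)$ and has at least two points, which is immediate since $g$ is a bijection and $k$ had at least two points inside $\ell_k$. Everything else is a routine transport-of-structure argument of exactly the same flavour as the proof of Proposition~\ref{prop:OutputIsRef}; there is no genuine obstacle, just the need to keep the inverse group elements straight.
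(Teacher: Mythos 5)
Your proposal is correct and follows essentially the same route as the paper: the paper defines $\mathcal{TL}$ directly as the set of lines of $R$ contained in $\ell$ (which, as you note, is what your definition via intersections reduces to), and then verifies the same three points — $T$ is a linear space, $G_\ell^\ell\leq\Aut(T)$, and equality of the line sets of $R$ and $\R(\mathcal{P},\mathcal{L},G,\ell,\mathcal{TL})$ by the same two inclusions using $G\leq\Aut(R)$ and line-transitivity of $G$ on $S$.
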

\begin{proof}
Write $R=(\mathcal{P},\mathcal{RL})$ and let $\mathcal{TL}=\{k\in\mathcal{RL}\mid k\subseteq \ell \}$. We first show that $T=(\mathcal{P}(\ell),\mathcal{TL})$ is a linear space. Let $u$ and $v$ be distinct elements of $\mathcal{P}(\ell)$.  These points must be contained in a unique line of $S$, which must necessarily be $\ell$. Moreover, they must also be contained in a unique line of $R$, say $k\in \mathcal{RL}$. Since $R$ is a refinement of $S$, $k\subseteq \ell$ so $k\in\mathcal{TL}$. If $k'$ is a line in $\mathcal{TL}$ containing $u$ and $v$, then by definition $k'\in \mathcal{RL}$ and $k=k'$ since $R$ is a linear space. This shows that $T$ is a linear space. By definition, $G_\ell^\ell\leq\Sym(\mathcal{P}(\ell))$. Since $G\leq \Aut(R)$, it follows that   $G_\ell^\ell\leq \Aut(T)$.

It remains to show that $R=(\mathcal{P},\mathcal{RL})=\R(\mathcal{P},\mathcal{L},G,\ell,\mathcal{TL})$. These have the same set of points so it remains to show that $\mathcal{RL}=\{t^g\mid t\in\mathcal{TL}, g\in G\}$. Let $t\in\mathcal{TL}$ and $g\in G$. By definition, $t\in\mathcal{RL}$ but $G\leq \Aut(R)$, so $t^g\in\mathcal{RL}$. This shows that $\{t^g\mid t\in\mathcal{TL}, g\in G\}\subseteq\mathcal{RL}$. For the other direction, let $t\in \mathcal{RL}$. Since $R$ is a refinement of $S$, there exists $k\in \mathcal{L}$ such that $t\subseteq k$ and, since $G$ is transitive on $\mathcal{L}$, there exists $g$ such that $k^g=\ell$. Now, $t^g\subseteq k^g=\ell$ and $t^g\in \mathcal{RL}$ so by definition $t^g\in  \mathcal{TL}$, as required.
\end{proof}

In light of Propositions~\ref{prop:refinement} and~\ref{prop:EveryRef}, it will be important to be able to determine $G_{\ell}^{\ell}$, especially in the case of $\LS(G)$. This is the content of our next two results, which will be useful in Section \ref{examples}. (For $H\leq G$, we denote the normaliser of $H$ in $G$ by $\N_G(H)$.)

\begin{lemma}
\label{line_aut_structure}
Let $G \leq \mathrm{Sym}(\Omega)$ and $u,v\in \Omega$. If $\ell =\{w\in\Omega\mid G_{uv}\leq G_w\}$, then the following statements hold:
\begin{enumerate}
\item $G_{[\ell]}=G_{uv}$;
\item $G_{\ell} = \N_G(G_{uv})$;
\item $G_{\ell}^{\ell}  \cong \N_G(G_{uv})/G_{uv}$.
\end{enumerate}
\end{lemma}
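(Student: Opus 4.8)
The plan is to prove the three statements in order, since each feeds into the next. Throughout, set $K = G_{uv}$ and recall that $\ell = \{w \in \Omega \mid K \leq G_w\}$ is precisely the set of points fixed pointwise by $K$; this reformulation is the key observation and makes (1) almost immediate.

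\textbf{Part (1).} I would argue that $G_{[\ell]}$, the pointwise stabiliser of $\ell$, equals $K$. The inclusion $K \leq G_{[\ell]}$ is just the definition of $\ell$: every $w \in \ell$ satisfies $K \leq G_w$, so $K$ fixes every point of $\ell$. For the reverse inclusion, note that $u, v \in \ell$ (since $K = G_{uv} \leq G_u$ and $K \leq G_v$), so any $g \in G_{[\ell]}$ fixes both $u$ and $v$, whence $g \in G_{uv} = K$.

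\textbf{Part (2).} Here I want $G_\ell = \N_G(K)$. For ``$\supseteq$'': if $g$ normalises $K = G_{[\ell]}$, then for any $w \in \ell$ we have $K = g^{-1}Kg \leq g^{-1}G_w g = G_{w^g}$, using part (1) and the fact that point stabilisers are permuted by conjugation in the obvious way; hence $w^g \in \ell$, so $g \in G_\ell$. For ``$\subseteq$'': if $g \in G_\ell$ preserves $\ell$ setwise, then it conjugates the pointwise stabiliser of $\ell$ to itself, i.e. $g^{-1} G_{[\ell]} g = G_{[\ell]}$, so by part (1), $g$ normalises $K$. (One should be slightly careful with the direction of the conjugation $G_w \mapsto G_{w^g}$ versus $G_{w^{g^{-1}}}$, but this is routine bookkeeping and the argument is symmetric in $g$ and $g^{-1}$.)

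\textbf{Part (3).} Finally, $G_\ell^\ell$ is by definition the quotient of $G_\ell$ by the kernel of its action on $\mathcal{P}(\ell) = \ell$, which is exactly $G_{[\ell]}$. By parts (1) and (2) this kernel is $K$ and the group is $\N_G(K)$, so $G_\ell^\ell = G_\ell / G_{[\ell]} \cong \N_G(G_{uv})/G_{uv}$. I do not anticipate a genuine obstacle here; the only mild subtlety is keeping the conjugation conventions consistent in part (2), and ensuring in part (1) that one has verified $u, v \in \ell$ so that $G_{[\ell]}$ is contained in $G_{uv}$ rather than something larger. No appeal to Property $(\star)$ or to $\LS(G)$ being a linear space is needed — the statement is purely about the abstract combinatorics of the fixed-point set of $G_{uv}$.
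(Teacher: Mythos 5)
Your proof is correct and follows essentially the same route as the paper's: identify $G_{[\ell]}$ with $G_{uv}$ using $u,v\in\ell$, prove both inclusions for $G_\ell=\N_G(G_{uv})$ via conjugation of point stabilisers and of $G_{[\ell]}$, and conclude (3) by the first isomorphism theorem. No gaps.
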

\begin{proof}
It follows from the definition of $\ell$ that $u,v\in\ell$ and $G_{[\ell]}= G_{uv}$, establishing (1). If $g \in \N_G(G_{uv})$ and $w \in \ell$, then $G_{uv} \leq G_w$ which implies $G_{uv}=(G_{uv})^g \leq G_{w}^g = G_{w^g}$, so $w^g \in \ell$. This shows $\N_G(G_{uv})\leq G_{\ell}$. In the other direction, if $g\in G_{\ell}$, then $(G_{[\ell]})=G_{[\ell]^g}=G_{[\ell]}$ so $g\in\N_G(G_{[\ell]})=\N_G(G_{uv})$. This completes the proof of (2). Finally, by the first isomorphism theorem, $G_{\ell}^{\ell}\cong G_{\ell}/G_{[\ell]}$ and (3) follows.
\end{proof}

Recall that a permutation group is \textit{semiregular} if all its point stabilisers are trivial and \textit{regular} if it is also transitive.

\begin{lemma}\label{lemma:Gll-semiregular}
Let $S$ be a linear space with $G\leq\Aut(S)$ and let $(u,\ell)$ be a flag of $S$. If $(S,G)$ is transverse, then $G_u\cap G_\ell=G_{[\ell]}$ and in particular $G_{\ell}^{\ell}$ is semiregular.
\end{lemma}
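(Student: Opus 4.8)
The plan is to prove the two assertions in turn, with the second following almost immediately from the first. First I would unpack what needs to be shown: $G_u \cap G_\ell$ is the subgroup of $G$ fixing $u$ and preserving $\mathcal{P}(\ell)$ setwise, while $G_{[\ell]}$ fixes $\mathcal{P}(\ell)$ pointwise. The inclusion $G_{[\ell]} \leq G_u \cap G_\ell$ is trivial since $u \in \ell$, so the content is the reverse inclusion.

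For the reverse inclusion, I would take $g \in G_u \cap G_\ell$ and show it fixes every point of $\ell$. Let $w \in \ell$ be arbitrary. Since $g$ fixes $u$, the point $w^g$ lies in the orbit $w^{G_u}$; since $g \in G_\ell$, we also have $w^g \in \mathcal{P}(\ell)$. Thus $w^g$ and $w$ both lie in $\ell \cap \Delta$, where $\Delta = w^{G_u}$ is an orbit of $G_u$. By the transversality hypothesis applied to the flag $(u,\ell)$, we have $|\ell \cap \Delta| \leq 1$, forcing $w^g = w$. As $w$ was an arbitrary point of $\ell$, this shows $g$ fixes $\mathcal{P}(\ell)$ pointwise, i.e. $g \in G_{[\ell]}$. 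Hence $G_u \cap G_\ell = G_{[\ell]}$.

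For the "in particular" clause, I would recall that $G_\ell^\ell$ is the permutation group induced by $G_\ell$ on $\mathcal{P}(\ell)$, and that under this action a point $u \in \mathcal{P}(\ell)$ has stabiliser $(G_\ell)_u = G_u \cap G_\ell$ modulo the kernel $G_{[\ell]}$ of the induced action. By the first part, $G_u \cap G_\ell = G_{[\ell]}$, so the stabiliser in $G_\ell^\ell$ of the point $u$ is trivial. Since the flag $(u,\ell)$ was arbitrary — any point of $\mathcal{P}(\ell)$ together with $\ell$ forms a flag — every point stabiliser in $G_\ell^\ell$ is trivial, so $G_\ell^\ell$ is semiregular.

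I do not anticipate a genuine obstacle here; the proof is short and the only subtlety is being careful that transversality is a hypothesis about all flags, so it applies equally to every point of $\ell$, which is exactly what lets us conclude semiregularity rather than merely triviality of one particular stabiliser. The main thing to get right is the bookkeeping between $G_\ell$ acting on $\Omega$ and its induced image $G_\ell^\ell$ acting on $\mathcal{P}(\ell)$, together with the identification of the kernel of that induced action as $G_{[\ell]}$.
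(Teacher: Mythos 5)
Your proof is correct and follows essentially the same route as the paper: the trivial inclusion, then using transversality of the flag $(u,\ell)$ to force each point of $\ell$ to be fixed by $G_u\cap G_\ell$, and deducing semiregularity of $G_\ell^\ell$ by identifying $G_{[\ell]}$ as the kernel of the induced action. Your explicit note that transversality applies to every flag, hence to every point of $\ell$, is exactly the "unpacking of definitions" the paper leaves implicit.
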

\begin{proof}
Clearly $G_{[\ell]}\leq G_u\cap G_\ell$ so we must show that $G_u\cap G_\ell$ fixes every point of $\ell$. It clearly fixes $u$, so let $v$ be another point of $\ell$. Let $\Delta=v^{G_u}$. Since $(S,G)$ is transverse, we have  $\ell\cap\Delta=\{v\}$, but $\ell\cap\Delta$ is preserved by $G_u\cap G_\ell$ so $v$ is fixed. The second statement follows from the first simply by unpacking the definitions involved.
\end{proof}

\section{Questions and examples}
\label{examples}
Our proof of cases (\ref{psl:main}) and (\ref{e=2}) of Theorem~\ref{Theo:ExtremelyPrimitiveLinearSpaces} both involved showing that $G$ has Property $(\star)$, that $(S,G)$ is transverse and then applying Proposition \ref{prop:refinement}. The first part of this approach still works in case (\ref{e=1}), as we showed in Section~\ref{sec:what} that $G$ has Property $(\star)$ even in this case, but there are at least two other issues. First, in case (\ref{e=1}), the stabiliser of two distinct points is trivial, so $\mathcal{L}(G)$ is the trivial linear space with a single line. Knowing that $S$ is a refinement of $\mathcal{L}(G)$ when $(S,G)$ is transverse therefore gives no information. The second problem is that, unlike in cases (\ref{psl:main}) and (\ref{e=2}), $(S,G)$ need not be transverse, as the following examples show.

\begin{example}\label{ex:projplane}
Let $\ell=\{0,1,3,9\}\subseteq\ZZ_{13}$. This is a perfect difference set so its translates form the lines of a linear space $S$ with point-set $\ZZ_{13}$. By definition, $\ZZ_{13}\leq \Aut(S)$. Let $\alpha$ be the permutation of $\ZZ_{13}$ corresponding to ``multiplication by $3$''. Note that $\ell^\alpha=\ell$ which implies that $\alpha\in\Aut(S)$ hence $G=\ZZ_{13}\rtimes\langle\alpha\rangle\leq\Aut(S)$. It is easy to see that $G$ is  extremely primitive on  $\ZZ_{13}$. Let $u=0$ and $\Delta=\{1,3,9\}$. Note that $G_u=\langle\alpha\rangle$  and that $\Delta$ is an orbit of $G_u$ with $|\ell\cap\Delta|=3$ so $(S,G)$ is not transverse. (Note that here $v=13$ and $k=4$ so $S$ is in fact the unique projective plane of order $3$ and its automorphism group is much bigger than $G$.)
\end{example}

Note that, in Example~\ref{ex:projplane}, $G$ is line-transitive (by construction). We were not able to find any other such example and wonder if any exist. More precisely:

\begin{question}\label{question}
Let $S$ be a nontrivial linear space with $G\leq\Aut(S)$ such that $G$ that is extremely primitive on points, transitive on lines and such that $(S,G)$ is not transverse. Does it follow that  $S$ is the projective plane of order $3$?
\end{question}

 As mentioned at the start of this section, in cases (\ref{psl:main}) and (\ref{e=2}) of Theorem~\ref{Theo:ExtremelyPrimitiveLinearSpaces}  $(S,G)$ is transverse so an example for Question~\ref{question} must arise from case (\ref{e=1}), that is $\C_p^d\rtimes\C_t\cong G\leq \AGL(1,p^d)$ for $t$ a primitive prime divisor of $p^d-1$. We have checked using {\sf{GAP}} that there is no other example with fewer than 1000 points. On the other hand, there seems to be plenty of examples which are not line-transitive:

\begin{example}\label{Ex:notLT}
Let $G$ be \texttt{PrimitiveGroup(25,1)} in {\sf{GAP}}. This group is generated by the following two permutations:
\begin{align*}
&(2,19,6)(3,25,11)(4,7,16)(5,13,21)(8,24,9)(10,15,14)(12,17,20)(18,23,22) {~\text and}\\ 
&(1,2,3,5,4)(6,7,8,10,9)(11,12,13,15,14)(16,17,18,20,19)(21,22,23,25,24).
\end{align*}
One can check that $G$ is extremely primitive and $\C_5^2\rtimes\C_3 \cong G \leq \mathrm{AGL}_1(5^2)$. Now, let $\ell_1 = \{ 1, 2, 6, 19\}$,  $\ell_2 = \{ 1, 3, 11, 25 \}$,  $\mathcal{L}=\ell_1 ^G \cup \ell_2^G$ and $S=(\{1,\ldots,25\},\mathcal{L})$. One can check that $S$ is a linear space with $(v,k,r,b) = (25,4,8,50)$. By construction, $G\leq \Aut(S)$ and it turns out that $G$ has two orbits on lines, with representatives $\ell_1$ and $\ell_2$. Note that $\Delta_i=\ell_i\setminus \{1\}$  is an orbit of $G_1$ with $|\Delta_i\cap \ell_i|=3$ and again $(S,G)$ is not transverse. As a final remark, we note that $\Aut(S)$ actually is isomorphic to \texttt{PrimitiveGroup(25,3)} and this group is not extremely primitive, nor transitive on lines.
\end{example}

Examples~\ref{ex:projplane} and~\ref{Ex:notLT} show that the classification of linear spaces arising from Theorem~\ref{Theo:ExtremelyPrimitiveLinearSpaces} (\ref{e=1}) is  more complicated than in cases (\ref{psl:main}) and (\ref{e=2}). This is not that surprising, since in case (\ref{e=1})   $G$  is very ``small'' (its point stabiliser has prime order) so the restriction $G\leq\Aut(S)$ is much weaker.

We now describe  $\LS(G)$ from Theorem~\ref{Theo:ExtremelyPrimitiveLinearSpaces} (\ref{psl:main}) and (\ref{e=2}) in more detail.

\begin{example}\label{ex:Fermat}
Let $q=2^{2^n}+1$ be a Fermat prime and  $G = \PSL_2(q-1)$ with point stabiliser $\D_{2q}$, as in Theorem \ref{Theo:ExtremelyPrimitiveLinearSpaces} (\ref{psl:main}).  As observed in Section \ref{prf_EPSL},  the two-point stabilisers in  $G$ have order $2$; hence we may identify each line $\ell$ of $\mathcal{LS}(G)$ with the involution $g_\ell$ that fixes any two points on it. By identifying points of $\mathcal{LS}(G)$ with their point stabilisers, we see that $\mathcal{LS}(G)$ is the \textit{Witt-Bose-Shrikhande space} $W(q-1)$ \cite[\S2.6]{BDD} with parameters
\[
(v,b,k,r) = \left( \frac{(q-1)(q-2)}{2},q(q-2), \frac{q-1}{2}, q \right)\!.
\]
 By Lemma \ref{line_aut_structure},  $G_\ell=\N_G(\langle g_\ell\rangle)$, which is elementary abelian of order $q-1$, while $G_\ell^\ell\cong\N_G(\langle g_\ell\rangle)/\langle g_\ell\rangle$,  an elementary abelian group of order $\frac{q-1}{2}$, which is semiregular by Lemma~\ref{lemma:Gll-semiregular}. Since $\frac{q-1}{2}=k$, it follows that $G_\ell^\ell$ is in fact regular. By the orbit-stabiliser theorem, the orbit of $\ell$ under $G$ has size $|G|/|G_\ell| = q(q-2) = b$, so $G$ is transitive on lines. 
\end{example}

\begin{example}\label{ex:e=2}
Let $p$ be a prime, $d\geq 2$, $t$ be a primitive prime divisor of $p^d-1$ and $e$ be a prime dividing $d$ such that $t=(p^d-1)/(p^{d/e}-1)$.
Let $G\leq \AGammaL_1(p^d)$ with point stabiliser  $H=\C_t\rtimes\C_e \leq \GammaL_1(p^d)$, as in Theorem \ref{Theo:ExtremelyPrimitiveLinearSpaces} (\ref{e=2}).
A two-point stabiliser in $G$ is conjugate to the group $\C_e$ which acts as field automorphisms on $\mathbb{F}_{p^d}$ and hence fixes the subfield of order $p^{d/e}$, so $\mathcal{LS}(G)$ has parameters
\[
(v,b,k,r) = (p^d,p^{d-d/e}t,p^{d/e}, t).
\]
By Lemma \ref{line_aut_structure}, $G_\ell = \N_G(\C_e) =\C_p^{d/e} \times \C_e$, while $G_\ell^\ell \cong \N_G(\C_e)/\C_e \cong\C_p^{d/e}$. Since $k=p^{d/e}$, we again obtain that $G_\ell^\ell$ is regular. Finally, the orbit of $\ell$ under $G$ has size $|G|/|G_\ell| = p^dte/(p^{d/e}e)= b$, so $G$ is transitive on lines. 
\end{example}

Note that in both previous examples, $G$ is transitive on lines  of $\mathcal{LS}(G)$ so by Proposition~\ref{prop:EveryRef}, all the refinements  of $\mathcal{LS}(G)$  admitting $G$ as a group of automorphisms arise via Construction~\ref{consSpace}.  We now present two final examples, which give nontrivial  such refinements and thus are also examples for Theorem~\ref{Theo:ExtremelyPrimitiveLinearSpaces}.

\begin{example}\label{Example:refinement}
Let $(p,d,e,t)=(7,5,5,2801)$, let $G$ be as in Example~\ref{ex:e=2} and let $S=\mathcal{LS}(G)$. If $\ell$ is a line of $S$, then $|\ell|=7$ and $G_\ell^\ell\cong\C_7$ is regular. If we set $T=(\mathcal{P}(\ell),\mathcal{TL})$ to be the Fano plane, then we have $\C_7\leq\Aut(T)$. By Proposition~\ref{prop:OutputIsRef}, the output $R$ of Construction~\ref{consSpace} is a linear space on $7^5$ points with lines of size $3$ which is a refinement of  $S$ and with $G\leq\Aut(R)$. Since $C_7$ is transitive on $\mathcal{TL}$, it follows that $G$ is transitive on the lines of $R$.
\end{example}

\begin{example}\label{Example:refinement2}
Let $q$ be the Fermat prime $65537=2^{16}+1$, let $G$ be as in Example~\ref{ex:Fermat} and let $S=\mathcal{LS}(G)$. If $\ell$ is a line of $S$, then $|\ell|=2^{15}$ and $G_\ell^\ell\cong\C_2^{15}$ is regular. Let $H\leq \GammaL_1(2^{15})$ such that $|H\cap \GL_1(2^{15})|=1057$ and $H=\C_{1057}\rtimes \C_3$. Let $V\cong \C_2^{15}$ be the natural vector space for $\GammaL_1(2^{15})$ and let $A=V\rtimes H\leq\AGammaL_1(2^{15})$. Note that Proposition~\ref{prop:SameOrbitSize} (\ref{count}) is satisfied (with $(p,d,e,t)=(2,15,3,1057)$), hence Lemma~\ref{lemma:Has*} implies that $A$ has Property $(\star)$ and by Proposition~\ref{prop:starLS}, $T:=\mathcal{LS}(A)$ is a linear space  such that $\C_2^{15}\cong V\leq A\leq \Aut(T)$. By Proposition~\ref{prop:OutputIsRef}, the output $R$ of Construction~\ref{consSpace} is a linear space which is a refinement of $S$ and with $G\leq\Aut(R)$. By similar calculations as in Example \ref{ex:e=2} we deduce that each line  contains $k:=p^{d/e}=32$ points and thus $R$ has parameters
\[
(v,k,r) = (2^{15}(2^{16}-1),32, qt).
\]
Since $(S,G)$ is transverse, so is $(R,G)$. In particular,  if $(u,m)$ is a flag of $R$ and $u\neq v \in \ell$,  then $|m^{G_u}| = |v^{G_u}| =|G_u|/|G_{uv}| = 2q/2=q$. It follows that $G_u$ has $r/q=t=1057$ orbits on lines meeting $u$.  Note that $G$ is smaller than the number of lines of $R$, so $G$ is not transitive on lines of $R$.
\end{example}

Example~\ref{Example:refinement2} answers a question posed implicitly  in \cite{GZ} about the existence of regular linear spaces that admit an extremely primitive automorphism group with classical socle, other than the Witt-Bose-Shrikhande spaces. In \cite[Lemma 3.2]{GZ}, Guan and Zhou show that in that case $G$ is as in Example~\ref{ex:Fermat}, that $q$ is at least $65537$ (the largest known Fermat prime) and that $G_u$ has at least $73$ orbits on lines meeting $u$, but are unable to determine if any examples actually arise. Example~\ref{Example:refinement2} gives one such example and our approach using refinements can be used to construct more.

\bibliographystyle{abbrv}
\bibliography{EPP}

\end{document}